\documentclass{article}

\usepackage{amscd}
\usepackage{tikz}
\usepackage{tikz-cd}
\usepackage{graphicx}
\usepackage{multirow}
\usepackage{amsmath,amssymb,amsfonts}
\usepackage{amsthm}
\usepackage{mathrsfs}
\usepackage[title]{appendix}
\usepackage{xcolor}
\usepackage{textcomp}
\usepackage{manyfoot}
\usepackage{booktabs}
\usepackage{algorithm}
\usepackage{algorithmicx}
\usepackage{algpseudocode}
\usepackage{listings}
\usepackage{hyperref}
\usepackage[top=3.5cm, bottom=4cm, left=3.5cm, right=3.5cm]{geometry}
\usepackage{tocbibind}

\newtheorem{theorem}{Theorem}
\newtheorem{proposition}[theorem]{Proposition}% 

\newtheorem*{example}{Example}%
\newtheorem{remark}[theorem]{Remark}%
\newtheorem*{main theorem}{Main Theorem}
\newtheorem{definition}[theorem]{Definition}%

\title{Orbifold Mapping Spaces via Bibundles}
\author{Yoshihiro Sugimoto}
\date{}

\begin{document}

\maketitle

\begin{abstract}
The existence of orbifold structures on mapping spaces between orbifolds was established in previous work, notably by W. Chen \cite{C} and B. Chen-Du-Liao \cite{CDL}. In this paper, we construct an infinite-dimensional orbifold structure on the mapping space in a more geometric way, using bibundles (or Hilsum--Skandalis morphisms), based on Lerman's description of orbifold morphisms \cite{L}. We construct a proper \'{e}tale Banach (or Fr\'{e}chet) Lie groupoid representing the mapping space in the ${C^k}$, $C^{\infty}$, and ${W^{k,p}}$ settings, and show that its Morita equivalence class is independent of the choices of groupoid representations. Locally, the mapping groupoid is isomorphic to the action groupoid of a finite group, where the finite group is identified with the automorphism group of the corresponding bibundle. This provides a concrete geometric framework for further studies of mapping spaces and moduli spaces on orbifolds, with applications to Floer theory on symplectic orbifolds.
\end{abstract}

\tableofcontents

\section{Introduction}
The concept of an orbifold was introduced by I. Satake under the name ``V-manifold'' \cite{S1,S2}. His original motivation was to extend the classical theory of manifolds to spaces that locally look like the quotient of a Euclidean space by a finite group action. In the 1970s, W. Thurston rediscovered these objects in the context of $3$-manifold topology and introduced the term ``orbifold'' \cite{T}. Since then, the theory of orbifolds has developed in a more categorical direction. In modern geometry, orbifolds are recognized as a special class of differentiable stacks, which can be represented by proper \'{e}tale Lie groupoids up to Morita equivalence. In the study of orbifolds, the space of maps ${\mathrm{Map}(X,Y)}$ between orbifolds $X$ and $Y$ serves as a fundamental object of study. To study these spaces rigorously, one needs to endow them with a suitable infinite-dimensional orbifold structure.

The existence of such a structure has been established in previous work. The foundational work of W. Chen \cite{C} gave a direct construction of the orbifold structure on the mapping space using local charts. Subsequently, B. Chen, Du and Liao \cite{CDL} developed a more categorical treatment within the framework of groupoids and their functors. Both approaches involve refinements and equivalences of local data. In the groupoid formulation, these are expressed in terms of localization and $2$-categorical data. These approaches provide natural descriptions of the mapping space, while a more direct geometric description is useful for applications in geometric analysis.

The objective of this paper is to provide a more concrete and geometric construction of mapping spaces between orbifolds using a geometric description of orbifold morphisms. Following Lerman \cite{L}, we represent orbifold morphisms using bibundles (also known as Hilsum--Skandalis morphisms). We use this description to construct an infinite-dimensional orbifold structure on the mapping space. We can now state the main result of this paper informally as follows. The precise statement is given in Section 4.
\begin{main theorem}[Theorem 31]
Let $X$ and $Y$ be orbifolds with compact coarse moduli spaces. Then, the mapping space ${\mathfrak{Map}(X,Y)}$ admits a natural structure of an infinite-dimensional orbifold. The construction is based on a geometric description of orbifold morphisms by bibundles and is independent of the choices of groupoid representations of $X$ and $Y$. Moreover, the coarse moduli space of this infinite-dimensional orbifold is naturally identified with the space of orbifold morphisms from $X$ to $Y$. Our construction also gives a concrete description of the local orbifold structure: locally, the mapping groupoid is isomorphic to the action groupoid of a finite group, where the finite group is the automorphism group of the corresponding bibundle.
\end{main theorem}

This work is motivated by further developments in Floer theory on symplectic orbifolds. The mapping spaces constructed here provide a geometric framework for studying moduli spaces arising in Floer theory on orbifolds. In particular, this framework is expected to facilitate the construction of Kuranishi structures for relevant moduli spaces. In a forthcoming sequel, we will apply this framework to develop Morse homology and Hamiltonian Floer homology on orbifolds.

The remainder of this paper is organized as follows. In Section 2, we review the background on groupoids, Morita equivalences, and orbifold groupoids. In Section 3, we discuss the localization of groupoids with respect to equivalences. We review Lerman's work \cite{L}, which shows that morphisms in the localized category are represented by equivalence classes of bibundles between Lie groupoids (also known as Hilsum--Skandalis morphisms). Section 4 is the core of this paper. We construct the mapping groupoid and establish the infinite-dimensional orbifold structure on the mapping space. We then show that its coarse moduli space is naturally isomorphic to the space of orbifold morphisms. Thus, the coarse moduli space agrees with the $1$-categorical mapping space. The precise form of the main theorem is given at the end of Section 4.

\section*{Acknowledgements}
The author is supported by JSPS KAKENHI Grant Number JP26K16985.

\section*{AI disclosure}
Large language models were used to assist with English language editing. The author takes full responsibility for the mathematical content and results of the manuscript.

\section{Lie groupoids and orbifolds}

In this section, we review the basic definitions and fix notation for Lie groupoids, largely following \cite{ALR}.

\subsection{Foundations of topological and Lie groupoids}

A \emph{groupoid} is a category in which every morphism is an isomorphism. A groupoid $\mathcal{G}$ consists of a set ${G_0}$ of objects and a set ${G_1}$ of morphisms, equipped with several structural maps: the source and target maps ${s,t\colon G_1\rightarrow G_0}$, the unit map ${u\colon G_0\rightarrow G_1}$, the inverse map ${i\colon G_1\rightarrow G_1}$, and the multiplication map ${G_1\times_{s,t}G_1\rightarrow G_1}$. These maps are required to satisfy the standard axioms of a category and the invertibility of morphisms.

The central objects of study in this paper are groupoids with geometric data. In the following sections, we will enrich these sets with topological or differentiable structures by assuming that $G_0$ and $G_1$ are topological spaces or smooth manifolds, and that all the structural maps are continuous or smooth, respectively.

\begin{definition}
A groupoid ${\mathcal{G}=(G_0,G_1)}$ is said to be a \emph{topological groupoid} if $G_0$ and $G_1$ are topological spaces and the structural maps satisfy the following conditions:
\begin{enumerate}
    \item The source map ${s\colon G_1\rightarrow G_0}$, which assigns each morphism ${g\in G_1}$ its source ${s(g)\in G_0}$, is continuous.
    \item The target map ${t\colon G_1\rightarrow G_0}$, which assigns each morphism ${g\in G_1}$ its target ${t(g)\in G_0}$, is continuous.
    \item The composition map ${m\colon G_1\times_{s,t}G_1\rightarrow G_1}$, which assigns ${g\in G_1}$ and ${h\in G_1}$ with ${s(g)=t(h)}$ their composition ${gh\in G_1}$, is continuous.
    \item The unit (or identity) map ${u\colon G_0\rightarrow G_1}$, which assigns each ${x\in G_0}$ the identity ${u(x)\colon x\rightarrow x}$, is continuous. Note that ${u(x)}$ is a two-sided unit for the composition.
    \item The inverse map ${i\colon G_1\rightarrow G_1}$, which assigns each ${g\in G_1}$ its inverse ${i(g)=g^{-1}}$, is continuous.
\end{enumerate}
\end{definition}

For a pair of objects ${x,y\in G_0}$, the set of morphisms from $x$ to $y$ is denoted by ${G_1(x,y)}$. 

\begin{gather*}
    G_1(x,y)=\{g\in G_1 \ | \ s(g)=x, t(g)=y\}
\end{gather*}

\begin{definition}
    A topological groupoid $\mathcal{G}$ is called a \emph{Lie groupoid} if the following conditions hold:
    \begin{enumerate}
        \item $G_0$ and $G_1$ are smooth manifolds.
        \item All structural maps $s$, $t$, $u$, $i$ and $m$ are smooth.
        \item The source map ${s\colon G_1\rightarrow G_0}$ is a smooth submersion.
    \end{enumerate}
    Note that condition ${(3)}$, together with the smoothness of the inverse map $i$ in ${(2)}$, implies that the target map ${t=s\circ i}$ is also a smooth submersion. This ensures that the fiber product ${G_1\times_{s,t}G_1}$ is a smooth manifold, making the smoothness of $m$ in ${(2)}$ well-defined.
\end{definition}

For a groupoid $\mathcal{G}$, the following relation defined in ${G_0\times G_0}$

\begin{gather*}
    \sim\stackrel{\mathrm{def}}{=}\{(x,y)\in G_0\times G_0 \ | \ \exists g\in G_1,  \mathrm{s.t.} \  s(g)=x, t(g)=y \}
\end{gather*}
determines an equivalence relation. The set of equivalence classes

\begin{gather*}
    |\mathcal{G}|=G_0/\sim
\end{gather*}
is called the \emph{coarse moduli space} of $\mathcal{G}$. If $\mathcal{G}$ is a topological groupoid, ${|\mathcal{G}|}$ is a topological space.

\begin{example}
    Let $X$ be a topological space. Consider a category $\mathcal{G}$ defined by ${G_0=G_1=X}$, meaning that the only morphisms in $\mathcal{G}$ are the identity morphisms. Under this setting, $\mathcal{G}$ is a topological groupoid.
\end{example}

\begin{example}
    Let $X$ be a topological space, and let $H$ be a topological group acting on $X$ from the left. Then, we can define a topological groupoid ${H\ltimes X}$, called the \emph{action groupoid} associated with the group action, by setting its space of objects and morphisms as ${(H\ltimes X)_0=X}$ and ${(H\ltimes X)_1=H\times X}$, respectively. The structural maps are explicitly defined as follows:
\begin{gather*}
    s(h,x)=x, \ t(h,x)=hx, \ m\big((k,hx),(h,x)\big)=(kh,x),  \\
    u(x)=(e,x), \ i(h,x)=(h^{-1},hx).
\end{gather*}
\end{example}

\begin{example}
    Let $X$ be a topological space, and let ${\{U_{\alpha}\}_{\alpha\in \Lambda}}$ be an open covering of $X$. We can construct a topological groupoid $\mathcal{G}$ from this covering by defining the space of objects $G_0$ as the disjoint union of the open sets:
    \begin{gather*}
        G_0=\coprod_{\alpha\in \Lambda}U_{\alpha}.
    \end{gather*}
    The space of morphisms $G_1$ is defined as the disjoint union of fiber products (intersections) over $X$:
    \begin{gather*}
        G_1=\coprod_{\alpha,\beta\in\Lambda}(U_{\alpha}\times_XU_{\beta}),
    \end{gather*}
    where an element ${(x,x)\in U_{\alpha}\times_X U_{\beta}}$ (with ${x\in U_{\alpha}\cap U_{\beta}}$) is regarded as a morphism from ${x\in U_{\alpha}}$ to ${x\in U_{\beta}}$. Under this construction, $\mathcal{G}$ becomes a topological groupoid.
\end{example}

\subsection{Functors, Morita equivalence, and orbifolds}
Since a Lie groupoid can be viewed as a category enriched with a differential structure, a natural next step is to study functors and natural transformations that preserve this geometric nature. In this subsection, we introduce smooth functors and smooth natural transformations, which will lead us to the geometric notion of Morita equivalence.

\begin{definition}
    Let $\mathcal{G}$ and $\mathcal{H}$ be Lie groupoids. A \emph{smooth functor} ${\phi\colon \mathcal{G}\rightarrow \mathcal{H}}$ consists of a pair of smooth maps ${\phi_0\colon G_0\rightarrow H_0}$ and ${\phi_1\colon G_1\rightarrow H_1}$ that are compatible with all structural maps. Explicitly, $\phi$ must satisfy the following conditions for all ${x\in G_0}$ and ${(g,f)\in G_1\times_{s,t}G_1}$:
    \begin{gather*}
        s_{\mathcal{H}}\big(\phi_1(g)\big)=\phi_0\big(s_{\mathcal{G}}(g)\big), \ \ t_{\mathcal{H}}\big(\phi_1(g)\big)=\phi_0\big(t_{\mathcal{G}}(g)\big), \\
        \phi_1\big(u_{\mathcal{G}}(x)\big)=u_{\mathcal{H}}\big(\phi_0(x)\big), \\
        \phi_1\big(m_{\mathcal{G}}(g,f)\big)=m_{\mathcal{H}}\big(\phi_1(g),\phi_1(f)\big),
    \end{gather*}
    where ${s_{\mathcal{G}}, t_{\mathcal{G}},u_{\mathcal{G}},m_{\mathcal{G}}}$ and ${s_{\mathcal{H}}, t_{\mathcal{H}},u_{\mathcal{H}},m_{\mathcal{H}}}$ denote the structural maps of $\mathcal{G}$ and $\mathcal{H}$, respectively.
\end{definition}

\begin{definition}
    Let $\mathcal{G}$ and $\mathcal{H}$ be Lie groupoids, and let ${\phi,\phi'\colon \mathcal{G}\rightarrow \mathcal{H}}$ be two smooth functors. A \emph{smooth natural transformation} ${\alpha\colon \phi\Rightarrow \phi'}$ is a smooth map ${\alpha\colon G_0\rightarrow H_1}$ that assigns to each object ${x\in G_0}$ a morphism ${\alpha(x)\colon \phi(x)\rightarrow \phi'(x)}$ in ${\mathcal{H}}$  such that for any morphism ${g\in G_1}$ from $x$ to $y$ (i.e. ${s_{\mathcal{G}}(g)=x}$ and ${t_{\mathcal{G}}(g)=y}$), the following diagram commutes:
    \[
    \begin{tikzcd}
\phi_0(x) \arrow[rrr, "\alpha(x)"] \arrow[dd, "\phi_1(g)"] &  &  & \phi'_0(x) \arrow[dd, "\phi'_1(g)"] \\
                                                           &  &  &                                   \\
\phi_0(y) \arrow[rrr, "\alpha(y)"]                         &  &  & \phi_0'(y).                        
\end{tikzcd}
    \]
    Explicitly, this commutativity means that $\alpha$ must satisfy the following algebraic relations:
    \begin{enumerate}
        \item ${s_{\mathcal{H}}\big(\alpha(x)\big)=\phi_0(x)}$ and  ${t_{\mathcal{H}}\big(\alpha(x)\big)=\phi'_0(x)}$ for all ${x\in G_0}$,
        \item ${m_{\mathcal{H}}\big(\phi'_1(g),\alpha(x)\big)=m_{\mathcal{H}}\big(\alpha(y),\phi_1(g)\big)}$ for all ${g\in G_1}$ from $x$ to $y$.
    \end{enumerate}
\end{definition}

While smooth functors provide a natural way to relate Lie groupoids, they turn out to be too rigid to capture the correct geometric maps between the underlying spaces. In geometry, two different Lie groupoids can present the same underlying geometric object. To resolve this issue, we need a flexible notion of ``maps" between Lie groupoids, which is achieved by inverting a special class of functors called equivalences. This process corresponds to the localization of the category of Lie groupoids. With this motivation in mind, let us first define what it means for a smooth functor to be an equivalence.

\begin{definition}
    A smooth functor ${\phi\colon \mathcal{G}\rightarrow\mathcal{H}}$ between two Lie groupoids is called an \emph{equivalence} if it satisfies the following conditions:
\begin{enumerate}
    \item The map 
    \begin{gather*}
     G_0\times_{\phi_0,H_0,t_{\mathcal{H}}}H_1\longrightarrow H_0 \\
            (x,h)\longmapsto s_{\mathcal{H}}(h)
    \end{gather*}
     is a surjective submersion.
    \item The diagram
    \[
    \begin{tikzcd}
     G_1 \arrow[rrr, "\phi_1"] \arrow[dd, "s_{\mathcal{G}}\times t_{\mathcal{G}}"] &  &  & H_1 \arrow[dd, "s_{\mathcal{H}}\times t_{\mathcal{H}}"] \\ &  &  &    \\
      G_0\times G_0 \arrow[rrr, "\phi_0\times \phi_0"]                              &  &  & H_0\times H_0                                          
    \end{tikzcd}
     \]
        is a fiber product. Explicitly, the map
        \begin{gather*}
            G_1\longrightarrow (G_0\times G_0)\times_{H_0\times H_0}H_1 \\
            g\longmapsto \big(s_{\mathcal{G}}(g),t_{\mathcal{G}}(g),\phi_1(g)\big)
        \end{gather*}
    \end{enumerate}
    is a diffeomorphism.
\end{definition}
The first condition implies that $\phi$ is essentially surjective, and the second condition implies that $\phi$ is fully faithful. The notion of equivalence defined above plays a fundamental role in constructing the correct category of Lie groupoids. A precise construction of the localization of the category of Lie groupoids with respect to these equivalences, along with the detailed properties of the resulting generalized morphisms, will be explained in Section 3.

Using the notion of equivalence, we can now define a natural equivalence relation on Lie groupoids, known as \emph{Morita equivalence}. Geometrically, Morita equivalent Lie groupoids can be thought of as different groupoid representations of the same underlying geometric object (such as a stack or an orbifold). 

\begin{definition}
    Two Lie groupoids $\mathcal{G}$ and $\mathcal{H}$ are said to be \emph{Morita equivalent} if there exists a third Lie groupoid $\mathcal{K}$ and a pair of equivalences,
    \begin{gather*}
        \mathcal{G}\stackrel{\phi}{\longleftarrow}\mathcal{K}\stackrel{\psi}{\longrightarrow}\mathcal{H}
    \end{gather*}
    where $\phi$ and $\psi$ are smooth functors that are equivalences in the sense of Definition 5.
\end{definition}
It is a standard fact that Morita equivalence defines a true equivalence relation on Lie groupoids. The transitivity of this relation is typically proven by constructing the fiber product of the diagram.

To conclude this subsection, we introduce two important classes of Lie groupoids, which allow us to provide a modern, groupoid-theoretic definition of orbifolds.

\begin{definition}
    A Lie groupoid $\mathcal{G}$ is said to be:
    \begin{enumerate}
        \item \emph{\'{e}tale} if the source map ${s\colon G_1\rightarrow G_0}$ (and hence the target map $t$) is a local diffeomorphism,
        \item \emph{proper} if the map ${(s,t)\colon G_1\rightarrow G_0\times G_0}$, given by ${g\mapsto \big(s(g),t(g)\big)}$, is a proper map in the topological sense (i.e., the preimage of any compact subset is compact).
    \end{enumerate}
\end{definition}
A Lie groupoid that is both proper and \'{e}tale is called a \emph{proper \'{e}tale Lie groupoid}, or simply an \emph{orbifold groupoid}. Historically, orbifolds were defined using local charts and finite group actions. In the modern language of groupoids, they can be defined as the geometric objects represented by orbifold groupoids.

\begin{definition}
    An \emph{orbifold} is a Morita equivalence class of orbifold groupoids.
\end{definition}
Alternatively, one can view an orbifold as a topological space equipped with such an orbifold structure. An \emph{orbifold structure} on a topological space $X$ is given by an orbifold groupoid $\mathcal{G}$ together with a homeomorphism
\begin{gather*}
    \phi\colon|\mathcal{G}|\stackrel{\cong}{\longrightarrow} X
\end{gather*}
where ${|\mathcal{G}|=G_0/\sim}$ denotes the coarse moduli space of $\mathcal{G}$. In this context, we say that the topological space $X$ is endowed with the orbifold structure $\mathcal{G}$.

\section{Localization of Lie groupoids and bibundles}

In Section 2.2, we motivated the need to invert equivalences in order to obtain the correct geometric category of Lie groupoids, where Morita equivalent groupoids become genuinely isomorphic. The algebraic machinery for such a process is provided by the general theory of the localization of a category. 

A concrete and geometric description of these localized morphisms can be realized using bibundles (or Hilsum--Skandalis morphisms), which were originally introduced by Hilsum and Skandalis \cite{HS}.

Following the modern and self-contained approach of Lerman \cite{L}, in this section we fix the notation and review the explicit equivalence between the localized $1$-category of Lie groupoids and the category of bibundles. This geometric framework will serve as a fundamental tool for our main results in the subsequent sections.

\subsection{Abstract localization of categories}

In this subsection, we briefly recall the general algebraic construction of the localization of a category. For a detailed treatment and a proof of the existence of such localizations, we refer the reader to Gelfand and Manin \cite{GM}. Let $\mathcal{C}$ be a category, and let ${W\subset \mathrm{Mor}(\mathcal{C})}$ be a collection of morphisms in $\mathcal{C}$. A localization of $\mathcal{C}$ with respect to $W$ is a category ${\mathcal{C}[W^{-1}]}$ together with a functor 
\begin{gather*}
    Q\colon \mathcal{C}\longrightarrow \mathcal{C}[W^{-1}]
\end{gather*}
that satisfies the following properties:

\begin{enumerate}
    \item For any ${w\in W}$, ${Q(w)}$ is an isomorphism in ${\mathcal{C}[W^{-1}]}$,
    \item\textbf{(Universal property)} For any category $\mathcal{D}$ and any functor ${F\colon \mathcal{C}\rightarrow \mathcal{D}}$ such that ${F(w)}$ is an isomorphism for any ${w\in W}$, there exists a unique functor ${\widetilde{F}\colon \mathcal{C}[W^{-1}]\rightarrow \mathcal{D}}$ making the following diagram commute:
\end{enumerate}

\[
\begin{tikzcd}
\mathcal{C} \arrow[rrr, "F"] \arrow[dd, "Q"']         &  &  & \mathcal{D} \\
                                                      &  &  &             \\
{\mathcal{C}[W^{-1}].} \arrow[rrruu, "\widetilde{F}"'] &  &  &            
\end{tikzcd}
\]

Explicitly, the existence of the localized category ${\mathcal{C}[W^{-1}]}$ is established by introducing a formal inverse for each ${w\in W}$. One first considers a category generated by the original morphisms of $\mathcal{C}$ together with a new morphism (formal inverse) ${x_{w}\colon Y\rightarrow X}$ for each ${w\colon X\rightarrow Y}$ in $W$. A morphism from $X$ to $Y$ in this localized category is represented by an equivalence class of a finite path of the form:

\begin{gather*}
    X=X_0\stackrel{\alpha_1}{\longrightarrow}X_1\stackrel{\alpha_2}{\longrightarrow}X_2\stackrel{\alpha_3}{\longrightarrow}\cdots \stackrel{\alpha_n}{\longrightarrow}X_n=Y
\end{gather*}
where each arrow ${\alpha_i\colon X_{i-1}\rightarrow X_i}$ is either an original morphism in $\mathcal{C}$ or a formal inverse ${x_w}$ of ${w\in W}$. The set ${\mathrm{Hom}_{\mathcal{C}[W^{-1}]}(X,Y)}$ is defined as the set of all such paths modulo the equivalence relation generated by the following relations:

\begin{enumerate}
    \item For each ${w\colon X\rightarrow Y}$ in $W$, the paths
    \begin{gather*}
        X\stackrel{w}{\longrightarrow}Y\stackrel{x_w}{\longrightarrow}X \quad \mathrm{and} \quad Y\stackrel{x_w}{\longrightarrow}X\stackrel{w}{\longrightarrow}Y
    \end{gather*}
are equivalent to
\begin{gather*}
    X\stackrel{\mathrm{Id}_X}{\longrightarrow} X \quad \mathrm{and} \quad Y\stackrel{\mathrm{Id}_Y}{\longrightarrow} Y
\end{gather*}
respectively.
\item If $f,g\in \mathrm{Mor}(\mathcal{C})$ are composable, then
\begin{gather*}
    X\stackrel{f}{\longrightarrow}Y\stackrel{g}{\longrightarrow}Z
\end{gather*}
is equivalent to 
\begin{gather*}
    X\stackrel{g\circ f}{\longrightarrow}Z.
\end{gather*}
\end{enumerate}
Although this algebraic construction using formal inverses and equivalence relations always works, computing ${\mathrm{Hom}_{\mathcal{C}[W^{-1}]}(X,Y)}$ in practice is difficult. 

We now apply this abstract algebraic framework to the geometric setting of Lie groupoids. Let ${\mathbf{Gpoid}}$ be the category whose objects are Lie groupoids and whose morphisms are smooth functors. From a geometric point of view, isomorphic smooth functors should be identified.

\begin{definition}
    We define ${\mathbf{Gp}}$ as the category whose objects are Lie groupoids, and whose morphisms are isomorphism classes of smooth functors. Here, two smooth functors ${F,G\colon \mathcal{G}\rightarrow\mathcal{H}}$ are defined to be isomorphic if there exists a smooth natural transformation between them. Note that since $\mathcal{H}$ is a Lie groupoid, any such natural transformation is automatically an isomorphism, since every morphism in ${H_1}$ has a smooth inverse.
\end{definition}

The category ${\mathbf{Gp}}$ still lacks another fundamental geometric refinement. Equivalences are not yet genuinely invertible within ${\mathbf{Gp}}$. If two Lie groupoids ${\mathcal{G}}$ and ${\mathcal{H}}$ are Morita equivalent, they describe exactly the same geometric object, much like choosing different atlases on a single manifold. Therefore, from a purely geometric perspective, two Morita equivalent Lie groupoids must be isomorphic in the category of Lie groupoids. In order to force equivalences to be invertible, we perform the localization process. We define ${W\subset \mathrm{Mor}(\mathbf{Gp})}$ as the set of all isomorphism classes of equivalences between Lie groupoids. By formally inverting all elements in $W$, we obtain a localized category of Lie groupoids ${\mathbf{Gp}[W^{-1}]}$. 

Consequently, the localized category ${\mathbf{Gp}[W^{-1}]}$ provides a geometrically correct framework for Lie groupoids within the context of $1$-categories. To describe this localized category in a more concrete and geometric manner, we are naturally led to the introduction of bibundles in the next subsection.

\subsection{A geometric model for the localized category}

In this subsection, we introduce the notion of bibundles to provide a differential geometric model for the morphisms in the localized category ${\mathbf{Gp}[W^{-1}]}$. First, we begin by recalling the definition of an action of a Lie groupoid on a manifold.

\begin{definition}
    A right action of a Lie groupoid ${\mathcal{H}}$ on a manifold $P$ consists of the following data:
    \begin{enumerate}
        \item (anchor map) \ ${a\colon P\rightarrow H_0}$.
        \item (right action) \ ${P\times_{a,H_0,t}H_1\rightarrow P}$, \ ${(p,h)\mapsto p\cdot h}$.
    \end{enumerate}
    Moreover, they satisfy the following conditions:
    \begin{itemize}
        \item ${a(p\cdot h)=s(h)}$ holds for all ${(p,h)\in P\times_{a,H_0,t}H_1}$.
        \item ${(p\cdot h_1)\cdot h_2=p\cdot (h_1h_2)}$ holds for all ${p\in P}$, ${h_1,h_2\in H_1}$ with ${a(p)=t(h_1)}$ and ${s(h_1)=t(h_2)}$.
        \item ${p\cdot 1_{a(p)}=p}$ holds for all ${p\in P}$.
    \end{itemize}
\end{definition}

We use the right action to formulate the notion of a principal right ${\mathcal{H}}$-bundle.

\begin{definition}
    Let ${\mathcal{H}}$ be a Lie groupoid, and let $P$ and $B$ be smooth manifolds. A smooth manifold $P$ equipped with a right $\mathcal{H}$-action and a surjective submersion ${\pi\colon P\rightarrow B}$ is a \emph{principal (right)} ${\mathcal{H}}$\emph{-bundle} if it satisfies the following conditions:
    \begin{enumerate}
        \item ${\pi(p\cdot h)=\pi(p)}$ holds for all ${(p,h)\in P\times_{a,H_0,t}H_1}$.
        \item ${P\times_{a,H_0,t}H_1\rightarrow P\times_BP}$, ${(p,h)\mapsto (p,p\cdot h)}$ is a diffeomorphism. So, $\mathcal{H}$ acts freely and transitively on the fiber of ${\pi\colon P\rightarrow B}$.
    \end{enumerate}
\end{definition}
Let $\mathcal{H}$ be a Lie groupoid. Then, $H_1$ naturally becomes a principal right $\mathcal{H}$-bundle over $H_0$. The anchor map ${a\colon H_1\rightarrow H_0}$ is given by the source map $s$, and the projection ${\pi\colon H_1\rightarrow H_0}$ is given by the target map $t$. This example is called the \emph{unit principal} $\mathcal{H}$\emph{-bundle}.

As in the case of ordinary principal bundles, principal right $\mathcal{H}$-bundles admit pull-back constructions along smooth maps between base manifolds. Let ${\pi\colon P\rightarrow B}$ be a principal right $\mathcal{H}$-bundle, and let ${f\colon M\rightarrow B}$ be a smooth map. We can define the pull-back of $P$, denoted by ${f^*P}$, as the fiber product of $\pi$ and $f$:
\[
    f^*P=M\times_BP.
\]
This ${f^*P}$ naturally inherits a canonical right $\mathcal{H}$-action and becomes a principal right $\mathcal{H}$-bundle.

Similarly to the right action, we can define the left action of a Lie groupoid.

\begin{definition}
    A left action of a Lie groupoid $\mathcal{G}$ on a manifold $P$ consists of the following data:
    \begin{enumerate}
        \item (left anchor map) \ ${a\colon P\rightarrow G_0}$.
        \item (left action) \ ${G_1\times_{s,G_0,a}P\rightarrow P}$, ${(g,p)\mapsto g\cdot p}$.
    \end{enumerate}
    We assume that they satisfy the following conditions:
    \begin{itemize}
        \item ${a(g\cdot p)=t(g)}$ holds for all ${(g,p)\in G_1\times_{s,G_0,a}P}$.
        \item ${g_2\cdot (g_1\cdot p)=(g_2g_1)\cdot p}$ holds for all ${p\in P}$, ${g_1,g_2\in G_1}$ with ${a(p)=s(g_1)}$ and ${t(g_1)=s(g_2)}$.
        \item ${1_{a(p)}\cdot p=p}$ holds for all ${p\in P}$.
    \end{itemize}
\end{definition}

A principal left $\mathcal{G}$-bundle is defined in a completely analogous way by replacing a right $\mathcal{H}$-action with a left $\mathcal{G}$-action. A smooth surjective submersion ${\pi\colon P\rightarrow B}$ equipped with a left $\mathcal{G}$-action with respect to a left anchor map ${a\colon P\rightarrow G_0}$ is called a \emph{principal left} $\mathcal{G}$\emph{-bundle} over $B$ if $\pi$ is ${\mathcal{G}}$-invariant and the map
\begin{gather*}
    G_1\times_{s,G_0,a}P\rightarrow P\times_BP, \ (g,p)\mapsto (g\cdot p,p)
\end{gather*}
is a diffeomorphism.

Before proceeding to the definition of bibundles, we provide a natural geometric motivation arising from smooth functors between Lie groupoids. Let ${f\colon \mathcal{G}\rightarrow \mathcal{H}}$ be a smooth functor between Lie groupoids. Recall that $H_1$ naturally forms a principal right $\mathcal{H}$-bundle over $H_0$ (unit principal right $\mathcal{H}$-bundle). We can pull back this principal right $\mathcal{H}$-bundle by the map ${f_0\colon G_0\rightarrow H_0}$:

\[
    f_0^*H_1=G_0\times_{f_0,H_0,t}H_1.
\]
The smooth functor $f$ allows us to equip ${f_0^*H_1}$ with a left $\mathcal{G}$-action that commutes with the right $\mathcal{H}$-action. For all ${\big(g,(x,h)\big)\in G_1\times_{G_0}f_0^*H_1}$ with ${s_{\mathcal{G}}(g)=x}$, we define the left action by
\begin{gather*}
    g\cdot (x,h)=\big(t_{\mathcal{G}}(g),f_1(g)\cdot h\big).
\end{gather*}
Consequently, we see that any smooth functor ${f\colon \mathcal{G}\rightarrow \mathcal{H}}$ yields a manifold equipped simultaneously with a principal right $\mathcal{H}$-action and a left $\mathcal{G}$-action. This structure serves as a prototypical example of a bibundle.

\begin{definition}
    Let $\mathcal{G}$ and ${\mathcal{H}}$ be Lie groupoids. A \emph{bibundle} from $\mathcal{G}$ to $\mathcal{H}$ is a manifold $P$ with two smooth anchor maps ${a_L\colon P\rightarrow G_0}$, ${a_R\colon P\rightarrow H_0}$ satisfying the following conditions: 
\begin{enumerate}
    \item There is a left action of $\mathcal{G}$ on $P$ with respect to the left anchor map ${a_L}$ and a right action of $\mathcal{H}$ on $P$ with respect to the right anchor map $a_R$.
    \item ${a_L\colon P\rightarrow G_0}$ is a principal right $\mathcal{H}$-bundle.
    \item ${a_R(g\cdot p)=a_R(p)}$ holds for all ${(g,p)\in G_1\times_{G_0}P}$.
    \item The left $\mathcal{G}$-action and the right ${\mathcal{H}}$-action are commutative:
    \begin{gather*}
        g\cdot(p\cdot h)=(g\cdot p)\cdot h.
    \end{gather*}
\end{enumerate}
\end{definition}
As we shall see later, such a bibundle $P$ plays the role of a morphism from $\mathcal{G}$ to $\mathcal{H}$. Thus, we write ${P\colon \mathcal{G}\rightarrow \mathcal{H}}$ to treat a bibundle as a morphism. Two bibundles ${P,Q\colon \mathcal{G}\rightarrow \mathcal{H}}$ are called \emph{isomorphic} if there exists a ${\mathcal{G}}$-$\mathcal{H}$ equivariant diffeomorphism between them.

As we have seen, a smooth functor ${f\colon \mathcal{G}\rightarrow \mathcal{H}}$ determines a bibundle ${f_0^*H_1}$. For simplicity, we denote this bibundle by ${\langle f\rangle}$. Suppose that we are given two smooth functors ${f,g\colon \mathcal{G}\rightarrow \mathcal{H}}$. If there exists a smooth natural transformation ${\alpha\colon f\Rightarrow g}$ between them, the bibundles ${\langle f\rangle }$ and ${\langle g\rangle }$ are canonically isomorphic. Indeed, we define a map ${\Phi\colon \langle f\rangle \rightarrow \langle g\rangle}$ by ${\Phi(x,h)=(x,\alpha(x)h)}$. It is straightforward to verify that $\Phi$ is a $\mathcal{G}$-$\mathcal{H}$ equivariant diffeomorphism. Next, we consider the composition of two bibundles, which corresponds to the composition of two maps.

\begin{definition}
    Let $\mathcal{G}$, $\mathcal{H}$ and $\mathcal{K}$ be Lie groupoids, and let ${P\colon \mathcal{G}\rightarrow \mathcal{H}}$ and ${Q\colon \mathcal{H}\rightarrow \mathcal{K}}$ be bibundles. We consider a right $\mathcal{H}$-action on the fiber product ${P\times_{H_0}Q}$ defined by ${(p,q)\cdot h=(p\cdot h,h^{-1}\cdot q)}$. Then, the quotient
    \begin{gather*}
        Q\circ P=(P\times_{H_0}Q)/ \mathcal{H}
    \end{gather*}
    is a smooth manifold and naturally inherits the structure of a bibundle from $\mathcal{G}$ to ${\mathcal{K}}$. We call this bibundle ${Q\circ P\colon \mathcal{G}\rightarrow \mathcal{K}}$ the composition of $P$ and $Q$.
\end{definition}
When considering the composition of three bibundles $P_1$, $P_2$ and $P_3$, this composition is not strictly associative. Indeed, although ${(P_3\circ P_2)\circ P_1}$ and ${P_3\circ (P_2\circ P_1)}$ are canonically isomorphic, they are not strictly equal. Consequently, when considering a category with bibundles as morphisms, we obtain not a strict $2$-category, but a bicategory (or weak $2$-category). However, since the higher-categorical machinery is not needed for the main purpose of this paper, we shall not pursue this direction further. 

\begin{remark}
Suppose that a bibundle ${P\colon \mathcal{G}\rightarrow \mathcal{H}}$ is also a principal left $\mathcal{G}$-bundle. Then we can define a new bibundle ${P^{-1}\colon \mathcal{H}\rightarrow \mathcal{G}}$. Here, $P^{-1}$ coincides with $P$ as a smooth manifold, but the roles of the anchor maps are swapped. The left $\mathcal{G}$-action and the right ${\mathcal{H}}$-action on $P$ are replaced by a right ${\mathcal{G}}$-action and a left $\mathcal{H}$-action, respectively, as follows:

\begin{gather*}
    p\cdot_{P^{-1}}g=g^{-1}\cdot_Pp, \ \ \ h\cdot_{P^{-1}}p=p\cdot_Ph^{-1}.
\end{gather*}
Under this construction, we have canonical isomorphisms
\begin{gather*}
    P^{-1}\circ P\cong \langle \mathrm{Id}_{\mathcal{G}}\rangle, \ \ \ P\circ P^{-1}\cong \langle \mathrm{Id}_{\mathcal{H}}\rangle.
\end{gather*}
\end{remark}
Bibundles that are principal left $\mathcal{G}$-bundles are related to equivalences between Lie groupoids. Indeed, a smooth functor ${f\colon \mathcal{G}\rightarrow \mathcal{H}}$ is an equivalence (i.e., essentially surjective and fully faithful) if and only if the induced bibundle ${\langle f\rangle \colon \mathcal{G}\rightarrow \mathcal{H}}$ is a principal left $\mathcal{G}$-bundle (Lemma 3.34 in \cite{L}). Let ${\mathbf{HS}}$ be the Hilsum--Skandalis category, which is a $1$-category whose objects are Lie groupoids and whose morphisms are isomorphism classes of bibundles. The assignment ${f\mapsto \langle f\rangle}$ defines a functor
\begin{gather*}
    \widetilde{z}\colon \mathbf{Gpoid}\longrightarrow \mathbf{HS}
\end{gather*}
from the category ${\mathbf{Gpoid}}$ of Lie groupoids and smooth functors to ${\mathbf{HS}}$. Since smooth natural transformations induce isomorphisms of bibundles, $\widetilde{z}$ naturally induces a functor
\begin{gather*}
    z\colon \mathbf{Gp}\longrightarrow \mathbf{HS}
\end{gather*}
where morphisms in ${\mathbf{Gp}}$ are isomorphism classes of smooth functors. As before, let $W$ be the set of morphisms in ${\mathbf{Gp}}$ represented by equivalences between Lie groupoids.

\begin{gather*}
    W=\{ \ [w]  \mid  w\in \mathbf{Gpoid}_1  \text{ is an equivalence} \ \}
\end{gather*}
Since the bibundle ${\langle f\rangle \colon \mathcal{G}\rightarrow \mathcal{H}}$ induced by any equivalence ${f\colon \mathcal{G}\rightarrow \mathcal{H}}$ is invertible in ${\mathbf{HS}}$, the universal property of localization induces a functor 
\begin{gather*}
    \widehat{z}\colon \mathbf{Gp}[W^{-1}]\longrightarrow \mathbf{HS}.
\end{gather*}
In this paper, we use bibundles to describe mapping spaces between orbifolds, and the following theorem justifies our choice.

\begin{theorem}[Proposition 3.39 in \cite{L}]
    The functor
    \begin{gather*}
       \widehat{z}\colon \mathbf{Gp}[W^{-1}]\longrightarrow \mathbf{HS} 
    \end{gather*}
    is an equivalence of categories.
\end{theorem}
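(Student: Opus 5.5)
To prove that $\widehat{z}$ is an equivalence of categories, I would show that it is bijective on objects (hence essentially surjective) and fully faithful. Both $\mathbf{Gp}[W^{-1}]$ and $\mathbf{HS}$ have the Lie groupoids as objects, and $\widehat{z}$ is the identity on objects, so everything reduces to a bijection
\[
\mathrm{Hom}_{\mathbf{Gp}[W^{-1}]}(\mathcal{G},\mathcal{H})\longrightarrow \mathrm{Hom}_{\mathbf{HS}}(\mathcal{G},\mathcal{H}).
\]
The key device is the following factorization: every bibundle $P\colon\mathcal{G}\to\mathcal{H}$ is isomorphic to a composite $\langle \psi\rangle\circ\langle\phi\rangle^{-1}$, where $\phi$ and $\psi$ are smooth functors and $\phi$ is an equivalence.

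\textbf{Surjectivity.} Given $P$, I would form the action groupoid $\mathcal{G}\ltimes P\rtimes\mathcal{H}$, or more simply the Lie groupoid $\mathcal{K}$ with
\[
K_0=P,\qquad K_1=G_1\times_{G_0}P\times_{H_0}H_1 .
\]
Its structure maps come from the commuting left and right actions. There are projection functors $\phi\colon\mathcal{K}\to\mathcal{G}$ (induced by $a_L$) and $\psi\colon\mathcal{K}\to\mathcal{H}$ (induced by $a_R$). Since $a_L\colon P\to G_0$ is a principal $\mathcal{H}$-bundle, it is a surjective submersion, and the $\mathcal{H}$-action is free and transitive on its fibers. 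From this one checks the two conditions of Definition 5 for $\phi$, so $\phi$ is an equivalence. One then constructs an explicit $\mathcal{G}$-$\mathcal{H}$ equivariant diffeomorphism
\[
\langle\psi\rangle\circ\langle\phi\rangle^{-1}\cong P,
\]
using the inverse bibundle of Remark 13 together with Lemma 3.34 of \cite{L}. This shows $\widehat{z}([\psi]\circ[\phi]^{-1})=[P]$.

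\textbf{Injectivity.} First, every morphism of $\mathbf{Gp}[W^{-1}]$ is represented by a single span $[\psi]\circ[\phi]^{-1}$ with $\phi\in W$. This holds because equivalences admit a calculus of fractions up to 2-cells: given a cospan $\mathcal{G}'\xrightarrow{f}\mathcal{H}\xleftarrow{w}\mathcal{H}'$ with $w$ an equivalence, the weak (homotopy) fiber product $\mathcal{G}'\times^h_{\mathcal{H}}\mathcal{H}'$ is a Lie groupoid, because $w$ satisfies the submersion condition (1), and its projection to $\mathcal{G}'$ is again an equivalence. This lets us commute a formal inverse past an ordinary functor, so zigzags collapse to spans.

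Now suppose two spans $(\phi,\psi)$ and $(\phi',\psi')$, with apex groupoids $\mathcal{K}$ and $\mathcal{K}'$, give isomorphic bibundles. I would show they are equal in the localization by producing a third groupoid $\mathcal{L}$ with equivalences $\mathcal{L}\to\mathcal{K}$ and $\mathcal{L}\to\mathcal{K}'$ such that the composites agree up to natural transformation. Natural candidates are the weak fiber product $\mathcal{K}\times^h_{\mathcal{G}}\mathcal{K}'$, or the groupoid built from the isomorphic bibundle as in the surjectivity step. Comparing each span to the canonical span of its bibundle suffices: the span $(\phi,\psi)$ is related to $\mathcal{K}_{\langle\psi\rangle\circ\langle\phi\rangle^{-1}}$ by a functor $\mathcal{K}\to\mathcal{K}_P$ that is an equivalence commuting with the legs up to natural isomorphism.

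\textbf{Main obstacle.} I expect the hardest part to be the injectivity step, and specifically the fact that 2-cells are invertible in $\mathbf{Gp}$. The relations in the localization are purely algebraic, so I must show that equivalent spans are identified by the generating relations. This requires checking that natural transformations become equalities in $\mathbf{Gp}$ (true by construction) and that the comparison functors are equivalences, so that their formal inverses can be inserted and cancelled. Verifying smoothness of the weak fiber products and of the comparison maps is routine but fiddly. If the direct approach becomes cumbersome, I would instead verify that $\mathbf{HS}$ itself satisfies the universal property of the localization, using the factorization above to define the extension $\widetilde{F}(P)=F(\psi)F(\phi)^{-1}$ and checking that it is well defined on isomorphism classes.
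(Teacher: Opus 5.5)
The paper does not prove this statement. It quotes Lerman's Proposition~3.39 and uses it as a black box to justify modelling orbifold morphisms by bibundles, so there is no internal argument to compare yours against. Your outline is a correct, self-contained reconstruction of the standard proof, and each step goes through:
\begin{itemize}
\item $\phi\colon\mathcal{G}\ltimes P\rtimes\mathcal{H}\to\mathcal{G}$ is an equivalence because $a_L$ is a surjective submersion and $P\times_{H_0}H_1\to P\times_{G_0}P$ is a diffeomorphism. The map $[(p,g),(p,h)]\mapsto g^{-1}\cdot p\cdot h$ then gives $\langle\psi\rangle\circ\langle\phi\rangle^{-1}\cong P$.
\item Weak fibre products along an equivalence exist by condition (1) of Definition~5. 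The natural isomorphism filling the square becomes an equality in $\mathbf{Gp}$, which is exactly what is needed to collapse zigzags to spans.
\item Faithfulness follows by comparing every span with the canonical span of its bibundle.
\end{itemize}
The citation buys the paper brevity. Your route buys an explicit inverse on hom-sets, $[P]\mapsto[\psi_P][\phi_P]^{-1}$, where $\phi_P,\psi_P$ are the two projections of $\mathcal{K}_P$.

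The one step you assert without construction, the comparison functor $F\colon\mathcal{K}\to\mathcal{K}_P$, can be written down directly. Let arrows $(g,p,h)$ of $\mathcal{K}_P$ run from $p$ to $g\cdot p\cdot h^{-1}$, and set $F(x)=[(x,1_{\phi_0(x)}),(x,1_{\psi_0(x)})]$ and $F(k)=(\phi_1(k),F(s(k)),\psi_1(k))$. Then $\phi_P\circ F=\phi$ and $\psi_P\circ F=\psi$ hold strictly. Moreover $[F]=[\phi_P]^{-1}[\phi]$ is invertible in the localization for purely formal reasons, so you do not need a 2-out-of-3 property for equivalences.

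The ``main obstacle'' you name is not really one. $\mathbf{Gp}$ has no 2-cells; the only fact you use is that naturally isomorphic functors define the same morphism of $\mathbf{Gp}$, which holds by definition.
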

The localized category ${\mathbf{Gp}[W^{-1}]}$ serves as the natural $1$-category of Lie groupoids. By the above theorem, it can be identified with the Hilsum--Skandalis category ${\mathbf{HS}}$. With this identification in hand, our main goal is to endow the ``mapping space" from an orbifold $\mathcal{G}$ to another orbifold ${\mathcal{H}}$ with a natural orbifold structure, in such a way that its coarse moduli space naturally coincides with the set of isomorphism classes of bibundles from $\mathcal{G}$ to $\mathcal{H}$.

\section{Orbifold mapping spaces}
In this section, we construct infinite-dimensional orbifold structures on mapping spaces between orbifolds. First, we introduce the notion of admissible orbifold groupoids. Throughout this section, we assume that the coarse moduli space of an orbifold groupoid is compact unless otherwise stated.

\begin{definition}
A pair ${(\mathcal{G},\widehat{\mathcal{G}})}$ of orbifold groupoids is called an \emph{admissible pair} if there exists a smooth equivalence ${f\colon\mathcal{G}\rightarrow \widehat{\mathcal{G}}}$ satisfying the following properties:    
\begin{enumerate}
    \item The maps ${f_0\colon G_0\rightarrow \widehat{G}_0}$ and ${f_1\colon G_1\rightarrow \widehat{G}_1}$ are open embeddings.
    \item ${f_0(G_0)}$ is a relatively compact subset of ${\widehat{G}_0}$.
     \item ${f_1(G_1)}$ is a relatively compact subset of ${\widehat{G}_1}$.
\end{enumerate}
In this case, we call $\mathcal{G}$ itself an \emph{admissible orbifold groupoid}.
\end{definition}
For an admissible pair ${(\mathcal{G},\widehat{\mathcal{G}})}$, we identify $\mathcal{G}$ with its image ${f(\mathcal{G})=(f_0(G_0),f_1(G_1))}$ without explicit mention.

\begin{proposition}
    For any orbifold groupoid $\mathcal{G}$, there exists an admissible orbifold groupoid $\mathcal{H}$ that is Morita equivalent to $\mathcal{G}$.
\end{proposition}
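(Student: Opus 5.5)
Since the coarse moduli space $|\mathcal{G}|$ is compact, my plan is to replace $\mathcal{G}$ by a Morita equivalent groupoid built from finitely many orbifold charts. Each chart will be shrunk inside a slightly larger chart, and the larger charts assemble into $\widehat{\mathcal{G}}$.

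\textbf{Step 1: local charts.} I will use the standard local structure of proper étale groupoids (cf.\ \cite{ALR}). For every $x\in G_0$ with isotropy group $G_x$, there is an open neighbourhood $V_x\subset G_0$ on which $G_x$ acts. The restriction $\mathcal{G}|_{V_x}$ is isomorphic to the action groupoid $G_x\ltimes V_x$, with $V_x$ identified with a $G_x$-invariant open ball in $\mathbb{R}^n$.

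Inside $V_x$ I choose $G_x$-invariant open balls $U_x\Subset U'_x\Subset V_x$, with $U_x$ relatively compact in $U'_x$. The images $|U_x|$ cover $|\mathcal{G}|$. By compactness, finitely many of them suffice, say $|U_1|,\dots,|U_N|$, with the corresponding $U'_i$ and $V_i$.

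\textbf{Step 2: the pair of groupoids.} Set $\widehat{H}_0=\coprod_i U'_i$ and $H_0=\coprod_i U_i$. Let $\widehat{\mathcal{H}}$ and $\mathcal{H}$ be the pull-back (restricted) groupoids $\mathcal{G}|_{\widehat{H}_0}$ and $\mathcal{G}|_{H_0}$. Concretely, $\widehat{H}_1$ is the set of $g\in G_1$ with $s(g)\in U'_i$ and $t(g)\in U'_j$, taken over all pairs $(i,j)$.

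Both inclusions $\coprod U_i\to G_0$ and $\coprod U'_i\to G_0$ are local diffeomorphisms whose images meet every orbit. Hence the induced functors $\mathcal{H}\to\mathcal{G}$ and $\widehat{\mathcal{H}}\to\mathcal{G}$ are equivalences in the sense of Definition 5. Condition (2) holds by construction of the restriction. Condition (1) holds because $t$ is a local diffeomorphism, so the map $(x,h)\mapsto s(h)$ is étale and surjective.

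Both groupoids are étale, and properness is inherited from $\mathcal{G}$. The inclusion $f\colon\mathcal{H}\to\widehat{\mathcal{H}}$ is an equivalence by the same argument, or by two-out-of-three. Its components $f_0$ and $f_1$ are inclusions of open subsets, hence open embeddings. Finally, $H_0$ is relatively compact in $\widehat{H}_0$, since each $\overline{U_i}$ is compact and contained in $U'_i$.

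\textbf{Step 3: relative compactness of $H_1$.} This is the step I expect to require care. The closure of $H_1$ in $\widehat{H}_1$ lies inside $(s,t)^{-1}\big(\overline{H_0}\times\overline{H_0}\big)$. Since $\overline{H_0}\times\overline{H_0}$ is compact and contained in $\widehat{H}_0\times\widehat{H}_0$, properness of $(s,t)$ for $\widehat{\mathcal{H}}$ makes this preimage compact. Therefore $H_1$ is relatively compact in $\widehat{H}_1$.

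The subtle point is that properness must be applied in $\widehat{\mathcal{H}}$, not in $\mathcal{G}$. The preimage is taken inside $\widehat{H}_1$, which is itself a preimage under $(s,t)$ of an open subset of $G_0\times G_0$. Since $(s,t)$ is proper for $\mathcal{G}$, its restriction over this open subset is still proper. This is exactly why I introduce the intermediate balls $U'_i$ rather than working with $V_i$ directly.

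With Steps 1 to 3, $(\mathcal{H},\widehat{\mathcal{H}})$ is an admissible pair, and $\mathcal{H}$ is Morita equivalent to $\mathcal{G}$ via the equivalence $\mathcal{H}\to\mathcal{G}$.
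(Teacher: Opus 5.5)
Your proposal is correct and follows essentially the same route as the paper. The paper likewise takes finitely many nested chart neighbourhoods: exponential balls of radii $\epsilon_{p_i}\subset 2\epsilon_{p_i}\subset 3\epsilon_{p_i}$ for an invariant metric, which match your $U_i\Subset U'_i\Subset V_i$. It then lets $\mathcal{H}$ and $\widehat{\mathcal{H}}$ be the restrictions of $\mathcal{G}$ to $\bigsqcup U_{p_i}^{\epsilon_{p_i}}$ and $\bigsqcup U_{p_i}^{2\epsilon_{p_i}}$. Your Step 3, which deduces relative compactness of $H_1$ in $\widehat{H}_1$ from properness of $(s,t)$ over the compact set $\overline{H_0}\times\overline{H_0}$, supplies a verification the paper leaves implicit.
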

For an orbifold groupoid $\mathcal{G}$, we fix a ${G_1}$-invariant Riemannian metric on $G_0$. The existence of such a metric follows from a gluing of locally defined invariant Riemannian metrics via a partition of unity. Moreover, the choice of such a metric is non-essential in this section because the coarse moduli space ${|\mathcal{G}|}$ is assumed to be compact.

\begin{proof}
 For any point ${p\in G_0}$ and any ${\delta>0}$, we define ${T_pG_0^{<\delta}}$ by
 \begin{gather*}
     T_pG_0^{<\delta}=\{v\in T_pG_0 \ | \ |v|<\delta\}.
 \end{gather*}
 We then choose a sufficiently small ${\epsilon_p>0}$ so that the exponential map
 \begin{gather*}
     \exp\colon T_pG_0^{<3\epsilon_p}\longrightarrow G_0
 \end{gather*}
 is a diffeomorphism onto its image. For ${\delta \le 3\epsilon_p}$, we denote ${\exp\big(T_pG_0^{<\delta}\big)}$ by ${U_p^{\delta}}$. By replacing $\epsilon_p$ with a sufficiently small value, we may assume that the restriction of $\mathcal{G}$ to ${U_p^{3\epsilon_p}}$ is isomorphic to the action groupoid ${K_p\ltimes U_p^{3\epsilon_p}}$ where $K_p$ is the isotropy group of $p$. Note that the restriction of $\mathcal{G}$ to ${U_p^{\delta}}$ is isomorphic to the action groupoid ${K_p\ltimes U_p^{\delta}}$ for all ${\delta\le 3\epsilon_p}$. Let ${\pi\colon G_0\rightarrow |\mathcal{G}|}$ be the natural projection. Since the coarse moduli space ${|\mathcal{G}|}$ is compact, we can choose ${p_1,\cdots,p_m\in G_0}$ such that
 \begin{gather*}
     |\mathcal{G}|=\bigcup_{1\le i\le m}\pi(U_{p_i}^{\epsilon_{p_i}}).
 \end{gather*}
We construct a new orbifold groupoid ${\mathcal{H}=(H_0,H_1)}$ as follows. First, we set
\begin{gather*}
    H_0=\bigsqcup_{1\le i\le m}U_{p_i}^{\epsilon_{p_i}}.
\end{gather*}
For any ${x,y\in H_0}$, the set of morphisms from $x$ to $y$ is defined by
\begin{gather*}
    H_1(x,y)=G_1(x,y).
\end{gather*}
Similarly, we construct a new orbifold groupoid ${\widehat{\mathcal{H}}=(\widehat{H}_0,\widehat{H}_1)}$ as follows:
\begin{gather*}
    \widehat{H}_0=\bigsqcup_{1\le i\le m}U_{p_i}^{2\epsilon_{p_i}}, \quad \widehat{H}_1(x,y)=G_1(x,y) \quad \mathrm{for} \ x,y\in \widehat{H}_0.
\end{gather*}
Since the images ${\pi(U_{p_i}^{\epsilon_{p_i}})}$ cover the coarse moduli space ${|\mathcal{G}|}$, the natural inclusions ${\mathcal{H}\rightarrow \mathcal{G}}$ and ${\widehat{\mathcal{H}}\rightarrow \mathcal{G}}$ are equivalences. In particular, the pair of orbifold groupoids ${(\mathcal{H},\widehat{\mathcal{H}})}$ is an admissible pair and ${\mathcal{H}}$ is an admissible orbifold groupoid that is Morita equivalent to $\mathcal{G}$.
 
\end{proof}

Next, we prove the following extension property for bibundles.

\begin{proposition}
    Let ${P\colon\mathcal{G}\rightarrow \mathcal{H}}$ be a bibundle between orbifold groupoids and let ${(\mathcal{H},\widehat{\mathcal{H}})}$ be an admissible pair. Then, there exist a bibundle ${\widehat{P}\colon \mathcal{G}\rightarrow \widehat{\mathcal{H}}}$ and an inclusion ${P\hookrightarrow \widehat{P}}$ such that the anchor maps ${a_L\colon P\rightarrow G_0}$, ${a_R\colon P\rightarrow H_0}$, ${\widehat{a}_L\colon \widehat{P}\rightarrow G_0}$, ${\widehat{a}_R\colon \widehat{P}\rightarrow \widehat{H}_0}$ satisfy ${\widehat{a}_L|_{P}=a_L}$ and ${\widehat{a}_R|_P=a_R}$. Moreover, such a bibundle ${\widehat{P}\colon \mathcal{G}\rightarrow \widehat{\mathcal{H}}}$ is unique up to isomorphism in the sense that if ${\widehat{P}'\colon \mathcal{G}\rightarrow \widehat{\mathcal{H}}}$ is another bibundle that satisfies the above properties, there exists a unique isomorphism ${\Phi\colon \widehat{P}\rightarrow \widehat{P}'}$ that commutes with the inclusion of ${P}$.
\end{proposition}

\begin{proof}
    First, we prove the existence of a right $\widehat{\mathcal{H}}$-bundle near any point ${p\in G_0}$. We choose a sufficiently small open neighborhood $U_p$ of $p$ and a local section ${\rho\colon U_p\rightarrow P}$ of the surjective submersion ${a_L\colon P\rightarrow G_0}$. Then, there exists a natural diffeomorphism
    \begin{gather*}
        \{(q,\gamma)\in U_p\times H_1 \ | \ a_R\circ \rho(q)=t(\gamma)\}\longrightarrow a_L^{-1}(U_p)  \\
        (q,\gamma)\longmapsto \rho(q)\cdot \gamma.
    \end{gather*}
    Moreover, this is an isomorphism as a right $\mathcal{H}$-bundle. We denote the right $\mathcal{H}$-bundle ${a_L^{-1}(U_p)}$ over $U_p$ by ${P|_{U_p}}$. We define a right ${\widehat{\mathcal{H}}}$-bundle ${\widetilde{P}_{U_p}}$ over $U_p$ by
    \begin{gather*}
        \widetilde{P}_{U_p}=\{(q,\gamma)\in U_p\times \widehat{H}_1 \mid a_R\circ \rho(q)=t(\gamma)\}.
    \end{gather*}
    Note that we can regard ${P|_{U_p}}$ as a subset of ${\widetilde{P}_{U_p}}$ via the above isomorphism. In this way, we can extend a right $\mathcal{H}$-bundle to a right $\widehat{\mathcal{H}}$-bundle. Next, for any two points ${p,q\in G_0}$, we show that we can construct a right ${\widehat{\mathcal{H}}}$-bundle over ${U_p\cup U_q}$ by gluing the two extended right ${\widehat{\mathcal{H}}}$-bundles over $U_p$ and $U_q$. For any point ${r\in U_p\cap U_q}$, we fix a point ${r'\in a_L^{-1}(r)\subset P}$. Then, as a pair consisting of a right $\mathcal{H}$-bundle and a right $\widehat{\mathcal{H}}$-bundle over ${\{r\}}$, the inclusion ${P|_{\{r\}}\subset \widetilde{P}_{U_p}|_{\{r\}}}$ (as well as ${P|_{\{r\}}\subset \widetilde{P}_{U_q}|_{\{r\}}}$) is isomorphic to the pair
    \begin{gather*}
        \{\gamma \in H_1 \mid t(\gamma)= a_R(r')\}\subset \{\gamma\in \widehat{H}_1 \mid t(\gamma)= \widehat{a}_R(r')\}.
    \end{gather*}
    Actually, the map
    \begin{gather*}
        \{\gamma\in H_1 \mid t(\gamma)=a_R(r')\}\longrightarrow P|_{\{r\}}  \\
        \gamma \longmapsto r'\cdot \gamma
    \end{gather*}
    is an isomorphism between right $\mathcal{H}$-bundles. For any ${\gamma \in \widehat{H}_1}$ with ${t(\gamma)=a_R(r')}$, there exist a unique ${x\in \widetilde{P}_{U_p}|_{\{r\}}}$ and a unique ${y\in \widetilde{P}_{U_q}|_{\{r\}}}$ corresponding ${r'\cdot \gamma}$:
    \begin{gather*}
        x=r'\cdot \gamma, \quad y=r'\cdot \gamma.
    \end{gather*}
    Thus, there exists a unique isomorphism ${\widetilde{P}_{U_p}|_{\{r\}}\rightarrow \widetilde{P}_{U_q}|_{\{r\}}}$ that induces  the identity on ${P|_{\{r\}}}$. In other words, there is a unique isomorphism between right ${\widehat{\mathcal{H}}}$-bundles over ${r\in G_0}$
    \begin{gather*}
        \phi_{pq}|_{\{r\}}\colon \widetilde{P}_{U_p}|_{\{r\}}\longrightarrow \widetilde{P}_{U_q}|_{\{r\}}
    \end{gather*}
    such that 
    \begin{gather*}
        \big(\phi_{pq}|_{\{r\}}\big)\big|_{P|_{\{r\}}}=\mathrm{Id}_{P|_{\{r\}}}
    \end{gather*}
    holds. This ${\phi_{pq}|_{\{r\}}}$ extends naturally to an isomorphism between right $\widehat{\mathcal{H}}$-bundles over ${U_p\cap U_q}$
    \begin{gather*}
        \phi_{pq}\colon \widetilde{P}_{U_p}|_{U_p\cap U_q}\longrightarrow \widetilde{P}_{U_q}|_{U_p\cap U_q}.
    \end{gather*}
    Therefore, we can construct a right ${\widehat{\mathcal{H}}}$-bundle over ${U_p\cup U_q}$ by gluing ${\widetilde{P}_{U_p}}$ and ${\widetilde{P}_{U_q}}$ via ${\phi_{pq}}$
    \begin{gather*}
        \big(\widetilde{P}_{U_p}\amalg \widetilde{P}_{U_q}\big)\big/ (x\sim \phi_{pq}(x)).
    \end{gather*}
    This yields an extension of $P$ to a right ${\widehat{\mathcal{H}}}$-bundle over ${U_p\cup U_q}$. By applying this gluing construction, we define a right $\widehat{\mathcal{H}}$-bundle $\widehat{P}$ by
    \begin{gather*}
        \widehat{P}=\Big(\coprod_{p\in G_0}\widetilde{P}_{U_p}\Big)\Big/ \sim.
    \end{gather*}
    Here, the equivalence relation $\sim$ is defined by ${x\sim \phi_{pq}(x)}$ for ${x\in \widetilde{P}_{U_p}|_{U_p\cap U_q}}$. We can naturally define a surjective submersion ${\widehat{a}_L\colon \widehat{P}\rightarrow G_0}$ and a right anchor map ${\widehat{a}_R\colon \widehat{P}\rightarrow \widehat{H}_0}$. Furthermore, $\widehat{P}$ contains the original bibundle $P$ as a subset. This $\widehat{P}$ is a desired extension of the right $\mathcal{H}$-bundle $P$ to a right $\widehat{\mathcal{H}}$-bundle. Next, we define a left $\mathcal{G}$-action on $\widehat{P}$. We choose any ${\gamma \in G_1}$ with ${s(\gamma)=p}$, ${t(\gamma)=q}$. Then, the left $\gamma$-action on $P$ induces a bijection that commutes with the right $\mathcal{H}$-action
    \begin{gather*}
        \psi_{\gamma}\colon P|_{\{p\}}\longrightarrow P|_{\{q\}} \\
        x\longmapsto \gamma \cdot x.
    \end{gather*}
    Similarly to the above argument, this bijection uniquely extends to a bijection $\widehat{\psi}_{\gamma}$ that commutes with the right $\widehat{\mathcal{H}}$-action
    \begin{gather*}
        \widehat{\psi}_{\gamma}\colon \widehat{P}|_{\{p\}}\longrightarrow \widehat{P}|_{\{q\}}.
    \end{gather*}
    This determines a left $\mathcal{G}$-action on $\widehat{P}$. Thus, $\widehat{P}$ is a desired bibundle that extends the original bibundle $P$.
    Finally, we prove the uniqueness of the extension of $P$. Let ${\widehat{P}'}$ be another extension of $P$ as a $\mathcal{G}$--${\widehat{\mathcal{H}}}$ bibundle. Similarly to the above argument, for any ${r\in G_0}$, there exists a unique isomorphism ${\Phi_r\colon \widehat{P}|_{\{r\}}\rightarrow \widehat{P}'|_{\{r\}}}$ between right ${\widehat{\mathcal{H}}}$-bundles that commutes with the inclusions ${P|_{\{r\}}\hookrightarrow \widehat{P}|_{\{r\}}}$ and ${P|_{\{r\}}\hookrightarrow \widehat{P}'|_{\{r\}}}$. 
    These local isomorphisms $\Phi_r$ can be glued smoothly over $G_0$ to form a global isomorphism ${\Phi:\widehat{P}\rightarrow \widehat{P}'}$. This completes the proof of uniqueness.
\end{proof}

We introduce the notion of $C^k$ bibundles.

\begin{definition}
    For ${k\in \mathbb{Z}_{\ge0}\cup \{\infty\}}$, a $C^k$ bibundle from a Lie groupoid $\mathcal{G}$ to a Lie groupoid $\mathcal{H}$ is defined in the same way as an ordinary bibundle, with the exception that the right anchor map ${a_R\colon P\rightarrow H_0}$ is a $C^k$ map between smooth manifolds. Note that $C^{\infty}$ bibundles are ordinary bibundles.
\end{definition}

Similarly, we introduce the notion of bibundles of Sobolev class.

\begin{definition}
    Let $\mathcal{G}$, $\mathcal{H}$ be orbifold groupoids and let $k$, $p$ be positive integers such that ${kp>\mathrm{dim}\mathcal{G}}$ holds. A $W^{k,p}$ bibundle from $\mathcal{G}$ to $\mathcal{H}$ is defined in the same way as an ordinary bibundle, with the exception that the right anchor map ${a_R\colon P\rightarrow H_0}$ is a locally $W^{k,p}$ map between smooth manifolds. Note that ${\mathrm{dim}P=\mathrm{dim}\mathcal{G}}$ holds because the fiber of the surjective submersion ${a_L\colon P\rightarrow G_0}$ is discrete. Therefore, a local $W^{k,p}$ map ${a_R}$ is continuous by the Sobolev embedding theorem.
\end{definition}

Next, for any orbifold groupoids $\mathcal{G}$, ${\mathcal{H}}$ and for any ${k\in \mathbb{Z}_{\ge 0}\cup \{\infty\}}$, we prove that the space of $C^k$ bibundles from $\mathcal{G}$ to $\mathcal{H}$ admits an infinite-dimensional orbifold groupoid structure. Without loss of generality, we may assume that $\mathcal{H}$ is an admissible orbifold groupoid by Proposition 18. Let ${(\mathcal{H},\widehat{\mathcal{H}})}$ be an admissible pair, and let ${P}$ be a $C^k$ bibundle from $\mathcal{G}$ to $\mathcal{H}$. To define a topology on this space of bibundles, we first construct a neighborhood of $P$. 

We fix an invariant Riemannian metric on $\widehat{H}_0$. Then, this metric lifts to a Riemannian metric on $\widehat{H}_1$. For any ${p\in H_0}$ and ${r>0}$, let ${T_pH_0^{<r}}$ be the open ball
\begin{gather*}
    T_pH_0^{<r}=\{v\in T_pH_0 \mid |v|<r\},
\end{gather*}
and let ${\delta_0(p)>0}$ be the real number defined by
\begin{gather*}
    \delta_0(p)=\sup \{r>0 \mid \exp_p \colon T_pH_0^{<r}\rightarrow \widehat{H}_0 \ \text{is a diffeomorphism onto its image.}\}.
\end{gather*}
Then, by the relative compactness of ${H_0\subset \widehat{H}_0}$, we have ${\delta_0(\mathcal{H})=\inf_{p\in H_0}\delta_0(p)>0}$. Similarly, for any ${\gamma \in H_1}$, we define ${T_{\gamma}H_1^{<r}}$ and ${\delta_1(\gamma)}$ as follows:

\begin{gather*}
    T_{\gamma}H_1^{<r}=\{v\in T_{\gamma}H_1 \mid |v|<r\}, \\
    \delta_1(\gamma)=\sup \{r>0 \mid \exp_{\gamma}\colon T_{\gamma}H_1^{<r}\rightarrow \widehat{H}_1 \ \text{is a diffeomorphism onto its image.}\}.
\end{gather*}
Then, we also have ${\delta_1(\mathcal{H})=\inf_{\gamma\in H_1}\delta_1(\gamma)>0}$. We fix a positive real number ${0<r<\dfrac{1}{10}\min\{\delta_0(\mathcal{H}),\delta_1(\mathcal{H})\}}$ such that the following conditions hold.

\begin{enumerate}
    \item If there exists ${\gamma\in H_1}$ with ${s(\gamma)=x}$ and ${t(\gamma)=y}$ for distinct points ${x,y\in H_0}$, then
    \begin{gather*}
        \exp_{x}(T_xH_0^{<r})\cap \exp_y(T_yH_0^{<r})=\emptyset.
    \end{gather*}
    \item By the \'{e}taleness of ${\widehat{\mathcal{H}}}$, the source map $s$ and the target map $t$ are local diffeomorphisms. For any ${\gamma\in H^{r}_1\subset \widehat{H}_1}$, the locally defined map ${s\circ t^{-1}}$ induces a diffeomorphism
    \begin{gather*}
        \phi_{\gamma}\colon \exp_{t(\gamma)}(T_{t(\gamma)}\widehat{H}_0^{<r})\longrightarrow \exp_{s(\gamma)}(T_{s(\gamma)}\widehat{H}_0^{<r}).
    \end{gather*}
    Here, ${H_1^{r}}$ is the $r$-neighborhood of $H_1$ in $\widehat{H}_1$.
    \item For any ${x\in H_0}$ and any distinct elements ${\gamma_1,\gamma_2\in \widehat{H}_1}$ with ${t(\gamma_1)=t(\gamma_2)=x}$, the open $r$-balls ${B_r(\gamma_1)}$ and ${B_r(\gamma_2)}$ in $\widehat{H}_1$ are disjoint.
    \end{enumerate}
Let ${P}$ be a ${C^k}$ bibundle from $\mathcal{G}$ to ${\mathcal{H}}$, and let ${\widehat{P}}$ be its extension to $\widehat{\mathcal{H}}$. For ${0<\epsilon\le r}$, we consider a lift ${f\colon P\rightarrow TH_0^{<\epsilon}}$ of the right anchor map ${a_R\colon P\rightarrow H_0}$ such that the following diagram commutes:
\[
\begin{tikzcd}
  &  &  &  & TH_0^{<\epsilon} \arrow[ddd,"\pi"] \\
  &  &  &  &  \\
  &  &  &  &  \\
P \arrow[rrrr, "a_R"] \arrow[rrrruuu, "f"] &  &  &  & H_0                    \end{tikzcd}
\]
where ${\pi\colon TH_0^{<\epsilon}\rightarrow H_0}$ is the canonical projection. We require $f$ to be $\mathcal{G}$-invariant and ${\mathcal{H}}$-equivariant in the following sense. Fix any ${p\in P}$, ${\gamma\in H_1}$ with ${t(\gamma)=a_R(p)}$. The element $\gamma$ induces a diffeomorphism between neighborhoods of ${a_R(p)}$ and ${a_R(p\cdot \gamma)=s(\gamma)}$ in $H_0$. Thus, it yields a linear isomorphism ${(\phi_{\gamma})_*\colon T_{a_R(p)}H_0\rightarrow T_{a_R(p\cdot \gamma)}H_0}$. We say that $f$ is $\mathcal{H}$-equivariant if 
\begin{gather*}
    f(p\cdot\gamma)=(\phi_{\gamma})_*\big(f(p)\big)
\end{gather*}
holds for all such $p$ and $\gamma$. On the other hand, a map $f$ is defined to be $\mathcal{G}$-invariant if
\begin{gather*}
    f(g\cdot p)=f(p)
\end{gather*}
holds for any ${(g,p)\in G_1\times_{G_0}P}$. Such a lift ${f\colon P\rightarrow TH_0^{<\epsilon}}$ extends uniquely to a $\mathcal{G}$-invariant and ${\widehat{\mathcal{H}}}$-equivariant lift ${\widehat{f}\colon \widehat{P}\rightarrow T\widehat{H}_0^{<\epsilon}}$ of the right anchor map ${\widehat{a}_R\colon \widehat{P}\rightarrow \widehat{H}_0}$ such that the following diagram commutes.

\[
\begin{tikzcd}
  &  &  &  & T\widehat{H}_0^{<\epsilon} \arrow[ddd,"\pi"] \\
  &  &  &  &  \\
  &  &  &  &  \\
\widehat{P} \arrow[rrrr, "\widehat{a}_R"] \arrow[rrrruuu, "\widehat{f}"] &  &  &  & \widehat{H}_0                    \end{tikzcd}
\]
Let $P^{\epsilon}$ be an $\epsilon$-neighborhood of $P$ in $\widehat{P}$. We denote the restriction of $\widehat{f}$ to $P^{\epsilon}$ by $f^{\epsilon}$:
\begin{gather*}
    f^{\epsilon}\colon P^{\epsilon}\longrightarrow T\widehat{H}_0^{<\epsilon}.
\end{gather*}
This $f^{\epsilon}$ is $\mathcal{G}$-invariant and ${\mathcal{H}^{\epsilon}}$-equivariant, where ${\mathcal{H}^{\epsilon}=(H_0^{\epsilon},H_1^{\epsilon})}$ is an $\epsilon$-neighborhood of $\mathcal{H}$ in $\widehat{\mathcal{H}}$. We define a map ${\exp_{f^{\epsilon}}}$ by
\begin{gather*}
    \exp_{f^{\epsilon}}\colon P^{\epsilon}\longrightarrow H_0^{2\epsilon} \\
    p\longmapsto \exp_{\widehat{a}
    _R(p)}\big(f^{\epsilon}(p)\big),
\end{gather*}
and we define the set ${P(f)}$ by 
\begin{gather*}
    P(f)=(\exp_{f^{\epsilon}})^{-1}(H_0).
\end{gather*}
We regard ${P(f)}$ as a $C^k$ bibundle that is sufficiently close to $P$. Hereafter, we fix ${0<\epsilon<\dfrac{1}{2}r}$.

\begin{proposition}
    The set ${P(f)}$ admits the structure of a bibundle from $\mathcal{G}$ to $\mathcal{H}$.
\end{proposition}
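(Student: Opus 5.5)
The plan is to equip $P(f)$ with the structure inherited from the $\mathcal{G}$--$\widehat{\mathcal{H}}$ bibundle $\widehat{P}$ of Proposition 19. We take the new right anchor to be $\exp_{f^{\epsilon}}$ rather than $\widehat{a}_R$. Concretely, define
\begin{gather*}
a_L^f=\widehat{a}_L|_{P(f)}\colon P(f)\rightarrow G_0,\qquad a_R^f=\exp_{f^{\epsilon}}|_{P(f)}\colon P(f)\rightarrow H_0 .
\end{gather*}
The left $\mathcal{G}$-action is the restriction of the $\mathcal{G}$-action on $\widehat{P}$. For the right action of $\eta\in H_1$ with $t(\eta)=a_R^f(q)$, we transport $\eta$ back to $\widehat{a}_R(q)$. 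Since $\exp_{\widehat a_R(q)}(f^\epsilon(q))$ lies within distance $\epsilon<r/2$ of $\widehat a_R(q)$, condition (2) on $r$ lets us use the local diffeomorphism $s\circ t^{-1}$ of the étale groupoid $\widehat{\mathcal{H}}$. This gives a unique germ-continuation $\widehat{\eta}\in H_1^{r}\subset\widehat H_1$ with $t(\widehat\eta)=\widehat a_R(q)$ whose associated local diffeomorphism $\phi_{\widehat\eta}$ sends $a_R^f(q)$ to $s(\eta)$, and whose germ at $a^f_R(q)$ equals $\eta$. Uniqueness comes from condition (3), which makes distinct arrows with a common target have disjoint $r$-balls. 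We then set $q\cdot\eta=q\cdot_{\widehat P}\widehat\eta$.

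The steps, in order, are as follows.

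\emph{Step 1: well-definedness.} We check that $q\cdot\eta\in P(f)$ and that $a_R^f(q\cdot\eta)=s(\eta)$. This uses the $\widehat{\mathcal H}$-equivariance of $\widehat f$, namely $\widehat f(q\cdot\widehat\eta)=(\phi_{\widehat\eta})_*\widehat f(q)$. It also uses the fact that $\phi_{\widehat\eta}$ is an isometry for the invariant metric, so that $\exp$ intertwines: $\exp_{s(\widehat\eta)}((\phi_{\widehat\eta})_*v)=\phi_{\widehat\eta}(\exp_{t(\widehat\eta)}v)$.

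\emph{Step 2: action axioms, invariance and commutativity.} The groupoid action axioms follow from the uniqueness of the transported arrow, since $\widehat{\eta_1\eta_2}=\widehat\eta_1\widehat\eta_2$ by uniqueness. The identity $a_R^f(g\cdot q)=a_R^f(q)$ follows from $\mathcal G$-invariance of $f$ and of $\widehat a_R$. Commutativity of the two actions is inherited from $\widehat P$.

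\emph{Step 3: the bibundle conditions.} We show that $a_L^f$ is a surjective submersion and that $P(f)$ is a principal right $\mathcal H$-bundle. Submersion is clear because $P(f)$ is open in $\widehat P$ (as $H_0$ is open in $\widehat H_0$) and $\widehat a_L$ is a local diffeomorphism. Smoothness of the structure maps, in the $C^k$ sense appropriate to right anchors, follows from the étale local formulas.

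\emph{Step 4: principality.} This is the main obstacle. We need surjectivity of $a_L^f$ and freeness and transitivity on fibers. Fix $x\in G_0$ and choose $p\in P$ over $x$. Then $\exp_{f^\epsilon}(p)\in H_0^{\epsilon}$, but it need not lie in $H_0$. We would first try to use that $\mathcal H\to\widehat{\mathcal H}$ is an equivalence: there is some $\gamma\in\widehat H_1$ from a point of $H_0$ to $\exp_{f^\epsilon}(p)$. Its transport to $\widehat a_R(p)$ via condition (2) gives $p\cdot\widehat\gamma\in P^{\epsilon}$ with $\exp_{f^\epsilon}(p\cdot\widehat\gamma)\in H_0$, and hence a point of $P(f)$ over $x$. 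Controlling that this transported arrow stays in $H_1^{r}$, so that condition (2) applies and $p\cdot\widehat\gamma$ stays in $P^\epsilon$, is the delicate point; here we use the choice $\epsilon<r/2$ and relative compactness.

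Transitivity goes as follows. Take $q,q'\in P(f)$ over $x$. They differ by a unique $\widehat\gamma\in\widehat H_1$ in $\widehat P$. We set $\eta$ to be the germ of $\phi_{\widehat\gamma}$ at $a^f_R(q')$, which lies in $H_1$ by full faithfulness of $\mathcal H\subset\widehat{\mathcal H}$. Freeness follows from freeness on $\widehat P$ together with the uniqueness of transport in condition (3). Finally, the diffeomorphism $P(f)\times_{H_0}H_1\to P(f)\times_{G_0}P(f)$ is a local diffeomorphism because everything is étale, and a bijection by the above, so it is a diffeomorphism.
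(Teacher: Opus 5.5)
Apart from the surjectivity claim in Step 4, your proposal is the paper's proof. It uses the same anchors $\widehat a_L|_{P(f)}$ and $\exp_{f^\epsilon}$, and the same action $q\cdot\eta=q\cdot\widehat\eta$, where $\widehat\eta$ is the arrow $\epsilon$-close to $\eta$ with target $\widehat a_R(q)$ (the paper's $\gamma'$). It also uses condition (3) for freeness, fullness of $\mathcal H\subset\widehat{\mathcal H}$ for transitivity, and ``\'{e}tale bijection over $G_0$'' for the final diffeomorphism. The paper never checks that $a_{L,P(f)}$ is surjective, so you are right to flag it. However, your argument for it does not go through.

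Condition (2) only covers arrows in $H_1^r$. An arrow $\gamma$ joining a point of $H_0$ to $z=\exp_{f^\epsilon}(p)$ need not lie in $H_1^r$, and shrinking $\epsilon$ or invoking relative compactness cannot put it there. Here is an example:
\begin{itemize}
\item Let $\mathcal G$ be a point, and let $\mathcal H\subset\widehat{\mathcal H}$ be the \v{C}ech groupoids of $\mathbb R/100\mathbb Z$ for $U_1=(0,1)$, $U_2=(1,3)$, $U_3=(0.999,1.001)$, $U_4=(2.5,100.5)$ and $\widehat U_1=(-1,2)$, $\widehat U_2=(0,4)$, $\widehat U_3=(-9,11)$, $\widehat U_4=(2,100.9)$.
\item Then $\min\{\delta_0(\mathcal H),\delta_1(\mathcal H)\}=0.4$, and $r=0.039$, $\epsilon=0.019$ meet all requirements.
\item Let $P=\{1\}$ be the bibundle at the point $0.99$ (only $U_1$ contains it), and let $f$ have length $0.018$ pointing right, so $z=1.008\in\widehat U_1$.
\item The only point of $H_0$ over $1.008$ lies in $U_2$. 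The arrow joining it to $z$ lies in the component $\widehat U_1\cap\widehat U_2$ of $\widehat H_1$, which contains no point of $H_1$ at all because $U_1\cap U_2=\emptyset$.
\item Yet $P(f)=\{2\}$ is nonempty.
\end{itemize}

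What is missing is a continuation statement that makes no reference to $H_1^r$. Lift the geodesic $c$ of length $<\epsilon$ from $z$ to $y=\widehat a_R(p)$ through the local diffeomorphism $t$, starting at $\gamma$ (with $s(\gamma)=w\in H_0$).
\begin{itemize}
\item Since the metric is invariant, the sources of the lifted arrows trace the geodesic from $w$ with initial velocity $(\phi_\gamma)_*\dot c(0)$. This geodesic exists in $\widehat H_0$ because $\delta_0(w)\ge\delta_0(\mathcal H)>20\epsilon$.
\item Properness of $\widehat{\mathcal H}$ (sources and targets stay in compact geodesic segments) shows that the lift cannot break off before time $1$.
\item The endpoint $\widehat\gamma$ has $t(\widehat\gamma)=y$ and $d(s(\widehat\gamma),w)<\epsilon$. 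Hence $p\cdot\widehat\gamma\in P^\epsilon$.
\item By equivariance and the exponential intertwining from your Step 1, $\exp_{f^\epsilon}(p\cdot\widehat\gamma)=w\in H_0$.
\end{itemize}
In the example this is just the inclusion $[0.99,1.008]\subset\widehat U_1\cap\widehat U_2$.

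The same lemma is also what actually justifies your transitivity step, and the paper's surjectivity of $\psi$. There, the arrow relating $q,q'\in P(f)$ is a priori only known to have endpoints in $H_0^\epsilon$, so condition (2) cannot be applied to it directly.
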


\begin{proof}
    Note that ${P(f)}$ is an open submanifold of ${\widehat{P}}$. To prove the proposition, it suffices to show the following assertions:
    \begin{itemize}
        \item There exists a left anchor map ${a_{L,P(f)}\colon P(f)\rightarrow G_0}$.
        \item There exists a right anchor map ${a_{R,P(f)}\colon P(f)\rightarrow H_0}$.
        \item There exists a left $\mathcal{G}$-action on ${P(f)}$.
        \item There exists a right $\mathcal{H}$-action $\star$ on ${P(f)}$.
        \item The left anchor map ${a_{L,P(f)}}$ is ${\mathcal{H}}$-invariant.
        \item The right anchor map ${a_{R,P(f)}}$ is $\mathcal{G}$-invariant.
        \item The canonical map ${P(f)\times_{a_{R,P(f)},H_0,t}H_1\rightarrow P(f)\times_{G_0}P(f)}$ given by ${(p,\gamma)\mapsto (p,p\cdot \gamma)}$ is a diffeomorphism.
    \end{itemize}
First, we define a left anchor map ${a_{L,P(f)}\colon P(f)\rightarrow G_0}$ by
\begin{gather*}
    a_{L,P(f)}=\widehat{a}_{L}|_{P(f)}.
\end{gather*}
We also define a left $\mathcal{G}$-action on ${P(f)}$ by restricting the left $\mathcal{G}$-action on $\widehat{P}$ to ${P(f)}$. By the $\mathcal{G}$-invariance of $f$, the left $\mathcal{G}$-action maps ${P(f)}$ into itself. Next, we define a right anchor map ${a_{R,P(f)}}$ by
\begin{gather*}
    a_{R,P(f)}(p)=\exp_{f^{\epsilon}}(p).
\end{gather*}
Fix ${p\in P(f)}$ with ${a_{R,P(f)}(p)=x}$ and ${\gamma\in H_1}$ with ${t(\gamma)=x}$. We want to define a right ${\mathcal{H}}$-action ${p\star \gamma\in P(f)}$. We denote the right $\mathcal{H}$-action on ${P(f)}$ by $\star$ to distinguish it from the right $\mathcal{H}$-action on $P$ and $\widehat{P}$. Recall that there is a diffeomorphism $\phi_{\gamma}$ between $r$-neighborhoods of ${x=t(\gamma)}$ and ${y=s(\gamma)}$. Since ${d(\widehat{a}_{R}(p),a_{R,P(f)}(p))<\epsilon<r}$, Condition ${(2)}$ on $r$ implies that there exists a unique ${\gamma'\in \widehat{H}_1}$ with ${d(\gamma,\gamma')<\epsilon}$ such that ${t(\gamma')=\widehat{a}_R(p)}$. We define 
\begin{gather*}
    p\star \gamma=p\cdot \gamma'.
\end{gather*}
By the ${\mathcal{H}^{\epsilon}}$-equivariance of $f^{\epsilon}$, we have
\begin{gather*}
    a_{R,P(f)}(p\star \gamma)=a_{R,P(f)}(p\cdot \gamma')=\exp_{f^{\epsilon}}(p\cdot \gamma') \\
    =\phi_{\gamma'}\big(\exp_{f^{\epsilon}}(p)\big)=\phi_{\gamma'}(x).
\end{gather*}
Thus, there exists ${\gamma''\in \widehat{H}_1}$ with ${t(\gamma'')=x}$, ${s(\gamma'')=a_{R,P(f)}(p\star \gamma)}$ such that ${d(\gamma,\gamma'')<2\epsilon}$. Since ${2\epsilon<r}$ and ${t(\gamma)=t(\gamma'')=x}$, Condition ${(3)}$ on $r$ guarantees ${\gamma=\gamma''}$. Consequently, we obtain 
\begin{gather*}
    a_{R,P(f)}(p\star \gamma)=s(\gamma).
\end{gather*}
In particular, ${a_{R,P(f)}}$ is $\mathcal{H}$-equivariant. The $\mathcal{H}$-invariance of ${a_{L,P(f)}}$ and the $\mathcal{G}$-invariance of ${a_{R,P(f)}}$ are obvious. Finally, we show that the canonical map
\begin{gather*}
    \psi\colon P(f)\times_{a_{R,P(f)},H_0,t}H_1\longrightarrow P(f)\times_{G_0}P(f) \\
    (p,\gamma)\longmapsto (p,p\star \gamma)
\end{gather*}
is a diffeomorphism. Assume that ${\psi(p_1,\gamma_1)=\psi(p_2,\gamma_2)}$ holds. Then, ${p_1=p_2=p}$ and ${p\star \gamma_1=p\star \gamma_2}$. By the definition of the right ${\mathcal{H}}$-action $\star$, there exist ${\gamma_1',\gamma_2'\in \widehat{H}_1}$ such that
\begin{gather*}
    d(\gamma_1,\gamma_1')<\epsilon, \quad d(\gamma_2,\gamma_2')<\epsilon \\
    p\star \gamma_1=p\cdot \gamma_1', \quad p\star \gamma_2=p\cdot \gamma_2'.
\end{gather*}
Since ${p\cdot \gamma_1'=p\cdot \gamma_2'}$, we get ${\gamma_1'=\gamma_2'}$. Consequently, we obtain
\begin{gather*}
    d(\gamma_1,\gamma_2)\le d(\gamma_1,\gamma_1')+d(\gamma_2,\gamma_2')<2\epsilon<r.
\end{gather*}
Since ${t(\gamma_1)=t(\gamma_2)=a_{R,P(f)}(p)}$, Condition ${(3)}$ on $r$ implies that ${\gamma_1=\gamma_2}$. Therefore, $\psi$ is injective. Let ${(p,q)\in P(f)\times_{G_0}P(f)}$ be any element. By ${(p,q)\in \widehat{P}\times_{G_0}\widehat{P}}$, there exists a unique ${\gamma'\in \widehat{H}_1}$ such that ${q=p\cdot \gamma'}$. Recall ${f^{\epsilon}(q)=(\phi_{\gamma'})\big(f^{\epsilon}(p)\big)}$ holds. Thus, there exists a unique ${\gamma\in \widehat{H}_1}$ in an $\epsilon$-neighborhood of $\gamma'$ such that
\begin{gather*}
    s(\gamma)=a_{R,P(f)}(q),\quad t(\gamma)=a_{R,P(f)}(p).
\end{gather*}
Then ${\gamma\in H_1}$ follows from ${s(\gamma), t(\gamma)\in H_0}$. Therefore, ${q=p\star \gamma}$ holds. This implies that $\psi$ is surjective. Since $\psi$ is a smooth bijection over $G_0$ with discrete fibers and acts as the identity on the base $G_0$, it is a diffeomorphism. This completes the proof.
\end{proof}

For ${k\in \mathbb{Z}_{\ge0}}$ and a $C^k$ bibundle ${P\colon \mathcal{G}\rightarrow \mathcal{H}}$, we define the vector space ${T_P\mathfrak{Map}_0^{C^k}\big(\mathcal{G},(\mathcal{H},\widehat{\mathcal{H}})\big)}$ by
\begin{gather*}
 T_P\mathfrak{Map}_0^{C^k}\big(\mathcal{G},(\mathcal{H},\widehat{\mathcal{H}})\big)=\Bigg\{f\colon P\rightarrow TH_0 \ \Bigg| \ \begin{matrix}
      \pi\circ f=a_R \\ f \ \text{is} \  \mathcal{G}\text{-invariant and} \ \mathcal{H}\text{-equivariant}
   \end{matrix}\Bigg\}.
\end{gather*}
Equipped with the $C^k$-norm, ${T_P\mathfrak{Map}_0^{C^k}\big(\mathcal{G},(\mathcal{H},\widehat{\mathcal{H}})\big)}$ carries a natural structure of a Banach space. For a sufficiently small ${\epsilon>0}$, we define the open $\epsilon$-ball by
\begin{gather*}
T_P\mathfrak{Map}_0^{C^k}\big(\mathcal{G},(\mathcal{H},\widehat{\mathcal{H}})\big)^{<\epsilon}=\Big\{f\in T_P\mathfrak{Map}_0^{C^k}\big(\mathcal{G},(\mathcal{H},\widehat{\mathcal{H}})\big) \ 
\Big| \ |f|_{C^k}<\epsilon  \Big\}.
\end{gather*}
By Proposition 22, for any ${f\in T_P\mathfrak{Map}_0^{C^k}\big(\mathcal{G},(\mathcal{H},\widehat{\mathcal{H}})\big)^{<\epsilon}}$, ${P(f)}$ is a $C^k$ bibundle from $\mathcal{G}$ to $\mathcal{H}$. We define an $\epsilon$-neighborhood of $P$ by
\begin{gather*}
    N_{\epsilon}^{C^k}(P)=\{P(f) \mid f\in T_P\mathfrak{Map}_0^{C^k}\big(\mathcal{G},(\mathcal{H},\widehat{\mathcal{H}})\big)^{<\epsilon}\}.
\end{gather*}
Let ${\operatorname{Iso}^{C^k}(\mathcal{G},\mathcal{H})}$ denote the set of isomorphism classes of $C^k$ bibundles from $\mathcal{G}$ to $\mathcal{H}$. For each class ${[P]\in\operatorname{Iso}^{C^k}(\mathcal{G},\mathcal{H})}$, we fix a representative $C^k$ bibundle $P$. We consider a groupoid ${\mathfrak{Map}^{C^k}\big(\mathcal{G},(\mathcal{H},\widehat{\mathcal{H}})\big)=\big(\mathfrak{Map}_0^{C^k}\big(\mathcal{G},(\mathcal{H},\widehat{\mathcal{H}})\big),\mathfrak{Map}_1^{C^k}\big(\mathcal{G},(\mathcal{H},\widehat{\mathcal{H}})\big)\big)}$ defined by
\begin{gather*}
    \mathfrak{Map}_0^{C^k}\big(\mathcal{G},(\mathcal{H},\widehat{\mathcal{H}})\big)=\coprod_{[P]\in \operatorname{Iso}^{C^k}(\mathcal{G},\mathcal{H})}N_{\epsilon}^{C^k}(P)
    \\
    \mathfrak{Map}_1^{C^k}\big(\mathcal{G},(\mathcal{H},\widehat{\mathcal{H}})\big)=\Bigg\{\Phi\colon P_1\rightarrow P_2 \Bigg| \ \begin{matrix}
        P_1,P_2\in \mathfrak{Map}_0^{C^k}\big(\mathcal{G},(\mathcal{H},\widehat{\mathcal{H}})\big) \\ \Phi \ \text{is a bibundle isomorphism.}
    \end{matrix}\Bigg\}.
\end{gather*}  
Note that ${\widehat{\mathcal{H}}}$ is used in the construction of ${P(f)}$. Later, we show that the equivalence class of this groupoid does not depend on $\widehat{\mathcal{H}}$, thus we can drop ${\widehat{\mathcal{H}}}$ from the notation. For an isomorphism ${\Phi\colon P_1\rightarrow P_2}$ in ${\mathfrak{Map}_1^{C^k}\big(\mathcal{G},(\mathcal{H},\widehat{\mathcal{H}})\big)}$, the source map $s$ and the target map $t$ are given by
\begin{gather*}
    s(\Phi)=P_1, \quad t(\Phi)=P_2.
\end{gather*}

Next, we define the topology ${\mathcal{O}_0^{C^k}\big(\mathcal{G},(\mathcal{H},\widehat{\mathcal{H}})\big)}$ on ${\mathfrak{Map}_0^{C^k}\big(\mathcal{G},(\mathcal{H},\widehat{\mathcal{H}})\big)}$. It is given as the disjoint union topology, so it suffices to specify the topology on each component ${N_{\epsilon}^{C^k}(P)}$. For any ${Q\in N_{\epsilon}^{C^k}(P)}$ and a sufficiently small ${\epsilon_Q>0}$, ${N_{\epsilon_Q}^{C^k}(Q)}$ is canonically identified with a subset of ${N_{\epsilon}^{C^k}(P)}$. We denote this embedding by
\begin{gather*}
    \iota_{Q,P}\colon N_{\epsilon_Q}^{C^k}(Q)\longrightarrow N_{\epsilon}^{C^k}(P).
\end{gather*}
Then, we define the topology ${\mathcal{O}_0^{C^k}\big(\mathcal{G},(\mathcal{H},\widehat{\mathcal{H}})\big)}$ by
\begin{gather*}
    \mathcal{O}_0^{C^k}\big(\mathcal{G},(\mathcal{H},\widehat{\mathcal{H}})\big)=\Bigg\{U\subset \mathfrak{Map}_0^{C^k}\big(\mathcal{G},(\mathcal{H},\widehat{\mathcal{H}})\big)\ \Bigg| \  \begin{matrix} \forall P,\forall Q\in U\cap N_{\epsilon}^{C^k}(P), \exists \epsilon_Q>0  \\\text{ such that }\operatorname{Im}(\iota_{Q,P})\subset U\end{matrix}\Bigg\}.
\end{gather*}
The following proposition guarantees that this collection indeed forms a topology.
\begin{proposition}
    The collection ${ \mathcal{O}_0^{C^k}\big(\mathcal{G},(\mathcal{H},\widehat{\mathcal{H}})\big)}$ is a topology on ${\mathfrak{Map}_0^{C^k}\big(\mathcal{G},(\mathcal{H},\widehat{\mathcal{H}})\big)}$.
\end{proposition}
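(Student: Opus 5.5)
We verify the three topology axioms directly from the definition: $U\in\mathcal{O}_0^{C^k}$ if and only if for every component $N_{\epsilon}^{C^k}(P)$ and every $Q\in U\cap N_{\epsilon}^{C^k}(P)$ there is $\epsilon_Q>0$ with $\operatorname{Im}(\iota_{Q,P})\subset U$. Since $\mathfrak{Map}_0^{C^k}\big(\mathcal{G},(\mathcal{H},\widehat{\mathcal{H}})\big)$ is a disjoint union, the condition is checked componentwise.

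\textbf{Empty set and whole space.} The condition is vacuous for $\emptyset$. For the whole space, any $\epsilon_Q$ for which $\iota_{Q,P}$ is defined works, because its image lies in $N_{\epsilon}^{C^k}(P)$ by construction.

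\textbf{Arbitrary unions.} Let $U=\bigcup_\lambda U_\lambda$ and $Q\in U\cap N_{\epsilon}^{C^k}(P)$. Then $Q\in U_\lambda$ for some $\lambda$. The radius $\epsilon_Q$ chosen for $U_\lambda$ gives $\operatorname{Im}(\iota_{Q,P})\subset U_\lambda\subset U$.

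\textbf{Finite intersections.} Let $Q\in U_1\cap U_2$ with radii $\epsilon_1,\epsilon_2$. Take $\epsilon_Q=\min\{\epsilon_1,\epsilon_2\}$. We need the monotonicity
\[
\operatorname{Im}(\iota_{Q,P}|_{N_{\epsilon'}(Q)})\subset \operatorname{Im}(\iota_{Q,P}|_{N_{\epsilon''}(Q)})\quad\text{for } \epsilon'\le\epsilon'',
\]
which holds because $N_{\epsilon'}^{C^k}(Q)\subset N_{\epsilon''}^{C^k}(Q)$: the ball of $C^k$-norm $<\epsilon'$ sits inside the ball of norm $<\epsilon''$ in $T_Q\mathfrak{Map}_0^{C^k}$. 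Here we use that $\iota_{Q,P}$ for the smaller radius is the restriction of $\iota_{Q,P}$ for the larger one, which we take as part of its canonical definition.

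\textbf{Main obstacle.} The delicate point is that $\iota_{Q,P}$ must be a well-defined, canonical map that is compatible under shrinking $\epsilon_Q$. Unwinding it, for $Q=P(f)\subset\widehat{P}$, the extension $\widehat{Q}$ from Proposition 19 can be realized inside a neighborhood structure compatible with $\widehat{P}$. A lift $g$ on $Q$ then corresponds to the bibundle $Q(g)$, and $\iota_{Q,P}$ sends $Q(g)$ to the unique $P(h)$ with
\[
\exp_{\widehat{a}_R(p)}\big(h(p)\big)=\exp_{a_{R,Q}(p)}\big(g(p)\big).
\]
Existence of such an $h$ with small $C^k$-norm follows from the inverse function theorem for the exponential map, applied pointwise within the injectivity radius $\delta_0(\mathcal{H})$ and with $\epsilon+\epsilon_Q<r$. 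Uniqueness and equivariance follow from the uniqueness in Proposition 19 and Conditions (1)–(3) on $r$. Well-definedness uses the same pointwise formula for every $\epsilon_Q$, so compatibility under restriction is automatic.

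I would state this identification as a short lemma before checking the axioms. With it in hand, the three axioms above are immediate. Note that transitivity of the $\iota$'s (open-ness of the sets $\operatorname{Im}\iota_{Q,P}$ themselves) is not needed for the axioms, only for showing later that these images form a basis.
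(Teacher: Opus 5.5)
Your proof is correct, but it is organized differently from the paper's. The paper reduces the statement to showing that the sets $N_{\epsilon_Q}^{C^k}(Q)$ form a basis. Given $P_3\in N_{\epsilon_1}^{C^k}(P_1)\cap N_{\epsilon_2}^{C^k}(P_2)$, it re-centers at $P_3$ by writing $P_3(g)=P_1(h)$ for small $g$, and concludes $N_{\epsilon_3}^{C^k}(P_3)\subset N_{\epsilon_1}^{C^k}(P_1)\cap N_{\epsilon_2}^{C^k}(P_2)$. You instead use that $\mathcal{O}_0^{C^k}$ is defined by asking for a centered neighborhood $\operatorname{Im}(\iota_{Q,P})$ around each point of $U$. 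Then the topology axioms need only two facts: such neighborhoods exist, and they are nested as $\epsilon_Q$ shrinks. The compatibility $\iota_{P_1,P}\circ\iota_{P_3,P_1}=\iota_{P_3,P}$, which the paper's argument relies on, is never needed. This is a valid and more economical proof of the literal statement.

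The paper's route gives a stronger fact: each $\operatorname{Im}(\iota_{Q,P})$ is itself open in $\mathcal{O}_0^{C^k}$, so the topology generated by these sets as a basis coincides with $\mathcal{O}_0^{C^k}$. That is exactly what the paper uses immediately afterwards, when it says $f\mapsto P(f)$ is a local homeomorphism onto $N_{\epsilon}^{C^k}(P)$ and so gives Banach charts. With your approach that re-centering lemma still has to be proved separately, as you note.

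One precision in your sketch of $\iota_{Q,P}$: the re-centered section $h$ must satisfy $|h|_{C^k}<\epsilon$, not merely be small. This holds because $h=f$ when $g=0$ and $|f|_{C^k}<\epsilon$ strictly. Continuity of $g\mapsto h$ in $C^k$ then gives the bound once $\epsilon_Q$ is small enough.
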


\begin{proof}
    We fix ${P\in \mathfrak{Map}_0^{C^k}\big(\mathcal{G},(\mathcal{H},\widehat{\mathcal{H}})\big)}$. For any ${Q\in N_{\epsilon}^{C^k}(P)}$ and a sufficiently small ${\epsilon_Q>0}$, we canonically identify ${N_{\epsilon_Q}^{C^k}(Q)}$ with a subset of ${N_{\epsilon}^{C^k}(P)}$. By the definition of the disjoint union topology, it suffices to prove that the collection 
    \begin{gather*}
        \big\{N_{\epsilon_Q}^{C^k}(Q) \ \big| \ Q\in N_{\epsilon}^{C^k}(P), \epsilon_Q>0 \text{ is sufficiently small}\big\}
    \end{gather*}
    forms a basis for the  induced topology on ${ N_{\epsilon}^{C^k}(P)}$. We fix ${P_1,P_2\in N_{\epsilon}^{C^k}(P)}$ and sufficiently small ${\epsilon_1,\epsilon_2>0}$.
    It suffices to prove that for any
    \begin{gather*}
        P_3\in N_{\epsilon_1}^{C^k}(P_1)\cap N_{\epsilon_2}^{C^k}(P_2),
    \end{gather*}
    there exists a sufficiently small ${\epsilon_3>0}$ such that
    \begin{gather*}
        N_{\epsilon_3}^{C^k}(P_3)\subset N_{\epsilon_1}^{C^k}(P_1)\cap N_{\epsilon_2}^{C^k}(P_2).
    \end{gather*}
    The bibundle $P_3$ is represented as
    \begin{gather*}
        P_3=P_1(f), \quad f\in T_{P_1}\mathfrak{Map}_0^{C^k}\big(\mathcal{G},(\mathcal{H},\widehat{\mathcal{H}})\big)^{<\epsilon_1}.
    \end{gather*}
    Let ${f^{\epsilon_1}\colon P_1^{\epsilon_1}\rightarrow T\widehat{H}_0^{\epsilon_1}}$ be the extension of $f$. Note that ${P_3\subset \widehat{P}_1}$ holds as underlying sets. However, because their right anchor maps as bibundles differ, we distinguish them by writing
    \begin{gather*}
        \widehat{a}_R^{(1)}\colon \widehat{P}_1\rightarrow \widehat{H}_0, \quad a_R^{(3)}\colon P_3\rightarrow H_0\subset \widehat{H}_0.
    \end{gather*}
    Then, the right anchor map ${a_R^{(3)}}$ satisfies
    \begin{gather*}
        a_R^{(3)}(x)=\exp_{\widehat{a}_R^{(1)}(x)}(f^{\epsilon_1}(x)) \quad (\forall x\in P_3).
    \end{gather*}
    For a sufficiently small ${\epsilon_3>0}$ and any ${g\in T_{P_3}\mathfrak{Map}_0^{C^k}\big(\mathcal{G},(\mathcal{H},\widehat{\mathcal{H}})\big)^{<\epsilon_3}}$, there exists ${h\in T_{P_1}\mathfrak{Map}_0^{C^k}\big(\mathcal{G},(\mathcal{H},\widehat{\mathcal{H}})\big)^{<\epsilon_1}}$ such that their extensions
    \begin{gather*}
        h^{\epsilon_1}\colon P_1^{\epsilon_1}\longrightarrow T\widehat{H}_0^{\epsilon_1}, \quad g^{\epsilon_3}\colon P_3^{\epsilon_3}\longrightarrow T\widehat{H}_0^{\epsilon_3}
    \end{gather*}
    satisfy ${P_3(g)=P_1(h)}$. This implies ${N_{\epsilon_3}^{C^k}(P_3)\subset N_{\epsilon_1}^{C^k}(P_1)}$. Similarly, by taking a smaller ${\epsilon_3>0}$ if necessary, ${N_{\epsilon_3}^{C^k}(P_3)\subset N_{\epsilon_2}^{C^k}(P_2)}$ also holds. Consequently, we obtain
    \begin{gather*}
        N_{\epsilon_3}^{C^k}(P_3)\subset N_{\epsilon_1}^{C^k}(P_1)\cap N_{\epsilon_2}^{C^k}(P_2),
    \end{gather*}
    which completes the proof.
\end{proof}

By this proposition, the topology ${\mathcal{O}_0^{C^k}\big(\mathcal{G},(\mathcal{H},\widehat{\mathcal{H}})\big)}$ is well-defined. Under this topology, the assignment ${f\mapsto P(f)}$ gives a local homeomorphism between the open $\epsilon$-ball ${T_P\mathfrak{Map}_0^{C^k}\big(\mathcal{G},(\mathcal{H},\widehat{\mathcal{H}})\big)^{<\epsilon}}$ and the neighborhood ${N_{\epsilon}^{C^k}(P)}$. This endows the mapping space ${\mathfrak{Map}_0^{C^k}\big(\mathcal{G},(\mathcal{H},\widehat{\mathcal{H}})\big)}$ with the structure of a Banach manifold whose tangent space at ${Q\in N_{\epsilon}^{C^k}(P)\subset \mathfrak{Map}_0^{C^k}\big(\mathcal{G},(\mathcal{H},\widehat{\mathcal{H}})\big)}$ is ${T_Q\mathfrak{Map}_0^{C^k}\big(\mathcal{G},(\mathcal{H},\widehat{\mathcal{H}})\big)}$.

Next, for an isomorphism between $C^k$ bibundles ${\Phi\colon P\rightarrow Q}$, we define a $\delta$-neighborhood of $\Phi$. For any ${f\in T_P\mathfrak{Map}_0^{C^k}\big(\mathcal{G},(\mathcal{H},\widehat{\mathcal{H}})\big)^{<\delta}}$, we define ${\Phi_*(f)\in T_Q\mathfrak{Map}_0^{C^k}\big(\mathcal{G},(\mathcal{H},\widehat{\mathcal{H}})\big)^{<\delta}}$ by
\begin{gather*}
    \Phi_*(f)(y)=f\big(\Phi^{-1}(y)\big) \quad (\forall y\in Q).
\end{gather*}
Note that ${\pi\big(\Phi_*(f)(y)\big)=a_{R,P}\big(\Phi^{-1}(y)\big)=a_{R,Q}(y)}$ holds because $\Phi$ is a bibundle isomorphism. Since $\Phi$ is $\mathcal{G}$-invariant and $\mathcal{H}$-equivariant, ${\Phi_*(f)}$ also satisfies the required invariance and equivariance condition.
Thus, this correspondence induces a bijection
\begin{gather*}
    \Phi_*\colon T_P\mathfrak{Map}_0^{C^k}\big(\mathcal{G},(\mathcal{H},\widehat{\mathcal{H}})\big)^{<\delta}\longrightarrow T_Q\mathfrak{Map}_0^{C^k}\big(\mathcal{G},(\mathcal{H},\widehat{\mathcal{H}})\big)^{<\delta} \\
    f\longmapsto \Phi_*(f).
\end{gather*}
By the construction of ${P(f)}$, $\Phi_*$ induces a diffeomorphism
\begin{gather*}
    \Phi_{*,0}\colon N_{\delta}^{C^k}(P)\longrightarrow N_{\delta}^{C^k}(Q)  \\  P(f)\longmapsto Q\big(\Phi_*(f)\big).
\end{gather*}
Moreover, $\Phi$ induces a natural isomorphism ${\Phi_{*,1}\colon P(f)\rightarrow Q\big(\Phi_*(f)\big)}$ between $C^k$ bibundles. Thus, we define the $\delta$-neighborhood of $\Phi$ by
\begin{gather*}
    N_{\delta}^{C^k}(\Phi)=\Big\{\Phi_{*,1}\colon P(f)\rightarrow Q\big(\Phi_*(f)\big) \ \Big| \ f\in T_P\mathfrak{Map}_0^{C^k}\big(\mathcal{G},(\mathcal{H},\widehat{\mathcal{H}})\big)^{<\delta}\Big\}.
\end{gather*}
We define the topology ${\mathcal{O}_1^{C^k}\big(\mathcal{G},(\mathcal{H},\widehat{\mathcal{H}})\big)}$ on ${\mathfrak{Map}_1^{C^k}\big(\mathcal{G},(\mathcal{H},\widehat{\mathcal{H}})\big)}$ by
\begin{gather*}
    \mathcal{O}_1^{C^k}\big(\mathcal{G},(\mathcal{H},\widehat{\mathcal{H}})\big)=\{U\subset \mathfrak{Map}_1^{C^k}\big(\mathcal{G},(\mathcal{H},\widehat{\mathcal{H}})\big) \mid \forall \Phi\in U, \exists \delta>0 \text{ s.t. }N_{\delta}^{C^k}(\Phi)\subset U \}.
\end{gather*}
Similarly to ${\mathcal{O}_0^{C^k}\big(\mathcal{G},(\mathcal{H},\widehat{\mathcal{H}})\big)}$, ${\mathcal{O}_1^{C^k}\big(\mathcal{G},(\mathcal{H},\widehat{\mathcal{H}})\big)}$ forms a topology on ${\mathfrak{Map}_1^{C^k}\big(\mathcal{G},(\mathcal{H},\widehat{\mathcal{H}})\big)}$. This follows from the following proposition.
\begin{proposition}
    The collection
    \begin{gather*}
        \{N_{\delta}^{C^k}(\Phi) \mid \Phi\in \mathfrak{Map}_1^{C^k}\big(\mathcal{G},(\mathcal{H},\widehat{\mathcal{H}})\big), \delta>0 \text{ is sufficiently small}\}
    \end{gather*}
    forms a basis for a topology ${\mathcal{O}_1^{C^k}\big(\mathcal{G},(\mathcal{H},\widehat{\mathcal{H}})\big)}$ on ${\mathfrak{Map}_1^{C^k}\big(\mathcal{G},(\mathcal{H},\widehat{\mathcal{H}})\big)}$.
\end{proposition}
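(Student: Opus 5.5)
We will mirror the proof of Proposition 23, with the objects $N_{\epsilon}^{C^k}(P)$ replaced by the morphism neighborhoods $N_{\delta}^{C^k}(\Phi)$. To show that these sets form a basis, we first note that they cover $\mathfrak{Map}_1^{C^k}\big(\mathcal{G},(\mathcal{H},\widehat{\mathcal{H}})\big)$, since $\Phi\in N_{\delta}^{C^k}(\Phi)$ (take $f=0$, so $P(0)=P$ and $\Phi_{*,1}=\Phi$). The real content is the intersection property. Given $\Phi_1\colon P_1\rightarrow Q_1$ and $\Phi_2\colon P_2\rightarrow Q_2$, sufficiently small $\delta_1,\delta_2>0$, and
\begin{gather*}
\Psi\in N_{\delta_1}^{C^k}(\Phi_1)\cap N_{\delta_2}^{C^k}(\Phi_2),
\end{gather*}
we must find $\delta_3>0$ with $N_{\delta_3}^{C^k}(\Psi)\subset N_{\delta_1}^{C^k}(\Phi_1)\cap N_{\delta_2}^{C^k}(\Phi_2)$.

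Write $\Psi=(\Phi_1)_{*,1}\colon P_1(f)\rightarrow Q_1\big((\Phi_1)_*f\big)$ and set $P_3=P_1(f)$, $Q_3=Q_1\big((\Phi_1)_*f\big)$. The key step is a compatibility statement. By Proposition 23 there is $\delta_3>0$ such that for every $g\in T_{P_3}\mathfrak{Map}_0^{C^k}\big(\mathcal{G},(\mathcal{H},\widehat{\mathcal{H}})\big)^{<\delta_3}$ there is $h\in T_{P_1}\mathfrak{Map}_0^{C^k}\big(\mathcal{G},(\mathcal{H},\widehat{\mathcal{H}})\big)^{<\delta_1}$ with $P_3(g)=P_1(h)$. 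The vector $h$ is obtained pointwise on $\widehat{P}_1$ by the rule
\begin{gather*}
\exp_{\widehat{a}^{(1)}_R(x)}\big(h(x)\big)=\exp_{a_R^{(3)}(x)}\big(g(x)\big).
\end{gather*}
We then check that
\begin{gather*}
Q_3\big(\Psi_*g\big)=Q_1\big((\Phi_1)_*h\big).
\end{gather*}
This holds because $\Phi_1$, extended uniquely to $\widehat{\Phi}_1\colon\widehat{P}_1\rightarrow\widehat{Q}_1$ by the uniqueness part of Proposition 19, preserves the right anchor maps. Hence the pointwise formula defining $h$ is transported by $\widehat{\Phi}_1$ to the formula relating $(\Phi_1)_*h$ and $\Psi_*g$.

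Next we check that the morphisms agree: $\Psi_{*,1}=(\Phi_1)_{*,1}$ as maps $P_3(g)=P_1(h)\rightarrow Q_1\big((\Phi_1)_*h\big)$. Both maps are restrictions of the same underlying map $\widehat{\Phi}_1$ on the underlying sets $P_3(g)\subset\widehat{P}_1$. Here we again use that the extension of a bibundle isomorphism to $\widehat{\mathcal{H}}$ is unique, and that $\widehat{P}_3$ and $\widehat{P}_1$ are canonically identified near $P_3$. This gives $N_{\delta_3}^{C^k}(\Psi)\subset N_{\delta_1}^{C^k}(\Phi_1)$. Shrinking $\delta_3$, the same argument applied to $\Phi_2$ gives $N_{\delta_3}^{C^k}(\Psi)\subset N_{\delta_2}^{C^k}(\Phi_2)$.

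The main obstacle is this canonical identification of extensions, namely that $\widehat{P_1(f)}$ agrees with $\widehat{P}_1$ on a neighborhood of $P_1(f)$. The two bibundles carry different right anchor maps, and the right action $\star$ differs from $\cdot$ by the small correction $\gamma\mapsto\gamma'$ of Proposition 22. I would handle this by using Proposition 22's description of $\star$ and conditions (2)–(3) on $r$: these show that the germ of $\widehat{\mathcal{H}}$-action near $P_3$ is determined by $\widehat{P}_1$, and then uniqueness in Proposition 19 identifies the extensions compatibly with $\Phi_1$.
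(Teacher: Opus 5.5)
Your proposal is correct and follows the paper's intended route. The paper omits this proof, saying only that it is completely analogous to Proposition 23, and you carry out exactly that analogy: you reduce to the object-level identity $P_3(g)=P_1(h)$, then check that the push-forwards agree, $Q_3(\Psi_*g)=Q_1((\Phi_1)_*h)$, and that the induced morphisms agree, $\Psi_{*,1}=(\Phi_1)_{*,1}$, via the uniqueness of extensions in Proposition 19, including the local identification of $\widehat{P_1(f)}$ with $\widehat{P}_1$ near $P_1(f)$ that the paper leaves implicit.
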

The proof is completely analogous to that of Proposition 23, and is therefore omitted. By the construction of the topology ${\mathcal{O}_1^{C^k}\big(\mathcal{G},(\mathcal{H},\widehat{\mathcal{H}})\big)}$, the source map $s$ and the target map $t$ are local diffeomorphisms. Thus, we obtain the following proposition.
\begin{proposition}
    The source map and the target map
    \begin{gather*}
        s,t\colon \mathfrak{Map}_1^{C^k}\big(\mathcal{G},(\mathcal{H},\widehat{\mathcal{H}})\big)\longrightarrow \mathfrak{Map}_0^{C^k}\big(\mathcal{G},(\mathcal{H},\widehat{\mathcal{H}})\big)
    \end{gather*}
    are \'{e}tale.
\end{proposition}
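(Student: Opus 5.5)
The plan is to show that $s$ and $t$ are local homeomorphisms, in fact local diffeomorphisms, by using the basic neighborhoods $N_{\delta}^{C^k}(\Phi)$ from Proposition 24 as the domains on which they are injective. The key point is that the charts on both sides are modeled on the same Banach balls. Fix $\Phi\colon P\rightarrow Q$ in $\mathfrak{Map}_1^{C^k}\big(\mathcal{G},(\mathcal{H},\widehat{\mathcal{H}})\big)$, and first treat the case where $P$ and $Q$ are the chosen representatives, so that $N_\delta^{C^k}(P)$ and $N_\delta^{C^k}(Q)$ are charts. On $N_\delta^{C^k}(\Phi)$ the source map reads
\begin{gather*}
s\big(\Phi_{*,1}\colon P(f)\rightarrow Q(\Phi_*(f))\big)=P(f).
\end{gather*}
Composed with the chart $f\mapsto P(f)$, this is the identity on the ball $T_P\mathfrak{Map}_0^{C^k}\big(\mathcal{G},(\mathcal{H},\widehat{\mathcal{H}})\big)^{<\delta}$. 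Similarly, $t$ corresponds to $f\mapsto \Phi_*(f)$, which is the restriction of the bounded linear isometry $\Phi_*$ between the two tangent Banach spaces. It is an isometry because $\Phi$ is a bibundle isomorphism and the $C^k$-norm is computed fibrewise through $a_R$, with $a_{R,Q}\circ\Phi=a_{R,P}$.

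The steps are as follows.
\begin{enumerate}
\item Check that $s|_{N_\delta(\Phi)}$ is a bijection onto $N_\delta(P)$. Injectivity requires that $\Phi_{*,1}$ be determined by $f$. It is, because $\Phi_{*,1}$ is the restriction of the unique extension $\widehat{\Phi}\colon\widehat{P}\rightarrow\widehat{Q}$ given by Proposition 21 to $P(f)\subset \widehat{P}$. Different $f$ give different $P(f)$, since $f\mapsto P(f)$ is a chart.
\item Check that $s$ and $t$ are continuous and open. This is immediate from the definitions of $\mathcal{O}_0$ and $\mathcal{O}_1$, since the basis elements $N_{\delta'}(\Phi')$ contained in $N_\delta(\Phi)$ map onto basis elements $N_{\delta'}(P')$ and $N_{\delta'}(Q')$.
\item Transport the Banach manifold structure to $\mathfrak{Map}_1$ through $s$. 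Then $s$ is a local diffeomorphism by construction. For $t$, the local expression is the linear isomorphism $\Phi_*$, so $t$ is also a local diffeomorphism.
\end{enumerate}

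The main obstacle is the general case where $\Phi\colon P_1\rightarrow P_2$ has $P_1=P(f_1)$ and $P_2=P'(f_2)$, neither being a chosen representative. Then $s$ and $t$ must be compared with the charts of $N_\epsilon(P)$ and $N_\epsilon(P')$ through the embeddings $\iota_{P_1,P}$ and $\iota_{P_2,P'}$ of Proposition 23. I would show these embeddings are smooth, with their local expressions being the transition maps $g\mapsto h$ constructed in the proof of Proposition 23. Pointwise, $h$ is obtained by solving
\begin{gather*}
\exp_{\widehat a_R^{(1)}(x)}(h(x))=\exp_{a_R^{(3)}(x)}(g(x)),
\end{gather*}
which is a smooth fibrewise operation on $\widehat{H}_0$. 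By the $\omega$-lemma, composition with such a smooth fibrewise map is smooth on $C^k$ sections. With this in hand, $s$ and $t$ are compositions of local diffeomorphisms, which completes the argument.
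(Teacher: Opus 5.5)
Your proposal is correct and takes the same approach as the paper. The paper simply notes that, by construction of $\mathcal{O}_1^{C^k}$, the maps $s$ and $t$ restrict to bijections of $N_\delta^{C^k}(\Phi)$ onto $N_\delta^{C^k}(P)$ and $N_\delta^{C^k}(Q)$, which in the charts $f\mapsto P(f)$ are the identity and the linear isomorphism $\Phi_*$. Your additional check that the embeddings $\iota_{Q,P}$ are smooth fills in a point the paper leaves implicit. One caution there: the fibrewise map depends on the base point only in a $C^k$ way, through the anchors. So the right tool is the composition lemma for $g\mapsto F\circ(\widehat a_R^{(1)},a_R^{(3)},g)$ with $F$ smooth, not an $\omega$-lemma that assumes joint smoothness in the base point.
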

Next, we establish the properness of ${s\times t}$.
\begin{proposition}
    The map
    \begin{gather*}
        s\times t\colon \mathfrak{Map}_1^{C^k}\big(\mathcal{G},(\mathcal{H},\widehat{\mathcal{H}})\big)\longrightarrow \mathfrak{Map}_0^{C^k}\big(\mathcal{G},(\mathcal{H},\widehat{\mathcal{H}})\big)\times \mathfrak{Map}_0^{C^k}\big(\mathcal{G},(\mathcal{H},\widehat{\mathcal{H}})\big)
    \end{gather*}
    is proper in $C^k$-topology.
\end{proposition}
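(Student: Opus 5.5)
The plan is to reduce properness to a finiteness and local-triviality statement about bibundle isomorphisms, using the local structure near a pair of points. Take a compact set $K\subset \mathfrak{Map}_0^{C^k}\times\mathfrak{Map}_0^{C^k}$ and a sequence (or net) $\Phi_n\colon P_n\rightarrow Q_n$ in $(s\times t)^{-1}(K)$. We will show that a subsequence converges in $\mathcal{O}_1^{C^k}$. Since $\mathfrak{Map}_0^{C^k}$ is a disjoint union of the charts $N_\epsilon^{C^k}(P)$, the set $K$ meets only finitely many components. After passing to a subsequence we may therefore assume $P_n\rightarrow P_\infty$ and $Q_n\rightarrow Q_\infty$, with $P_n=P_\infty(f_n)$, $Q_n=Q_\infty(g_n)$ and $f_n\rightarrow 0$, $g_n\rightarrow 0$ in $C^k$. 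This uses that the charts of Proposition 23 are metrizable balls in a Banach space, so that compactness gives sequential compactness.

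The key step is to show that each $\Phi_n$ is, for large $n$, of the form $\Psi_{*,1}$ for some isomorphism $\Psi\colon P_\infty\rightarrow Q_\infty$, and that only finitely many such $\Psi$ occur. All the $P_n$, $Q_n$ sit as open subsets of the extensions $\widehat{P}_\infty$, $\widehat{Q}_\infty$ from Proposition 19. An isomorphism $\Phi_n$ is determined fiberwise over $G_0$, since it commutes with $a_L$ and is $\mathcal{H}$-equivariant. Over each point $x\in G_0$ it is determined by the image of a single point, because the fibers are $\mathcal{H}$-torsors.

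We compare $\Phi_n$ with the rigid structure. For $p\in P_n$, the element $\Phi_n(p)\in Q_n\subset\widehat{Q}_\infty$ has right anchor within $\epsilon$ of $a_R^{Q_\infty}$ of the corresponding point. By conditions (2) and (3) on $r$, we can uniquely correct $\Phi_n$ by small elements of $\widehat{H}_1$, as in the proof of Proposition 22. This yields a $\mathcal{G}$-$\widehat{\mathcal{H}}$-equivariant bijection $\widehat{P}_\infty\supset P_\infty\rightarrow\widehat{Q}_\infty$. Its image lands in $Q_\infty$ because the anchors are matched up to the $\epsilon$-correction, so it is an isomorphism $\Psi_n\colon P_\infty\rightarrow Q_\infty$ with $(\Psi_n)_{*,1}=\Phi_n$ and $(\Psi_n)_*f_n=g_n$.

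It remains to show that $\mathrm{Iso}(P_\infty,Q_\infty)$ is finite. Fix finitely many points $x_1,\dots,x_m\in G_0$ whose orbits meet every connected component of $G_0$, using compactness of $|\mathcal{G}|$. An isomorphism is determined by its values on the fibers over the $x_i$, by continuity, the discreteness of torsor fibers and $\mathcal{G}$-equivariance. Each such fiber map is given by an element of $H_1$ between two fixed points of $H_0$, and properness of $\mathcal{H}$ makes this set finite. Hence some $\Psi$ repeats infinitely often. Along that subsequence $\Phi_n=\Psi_{*,1}(f_n)\in N^{C^k}_\delta(\Psi)$ with $f_n\rightarrow 0$, so $\Phi_n\rightarrow\Psi$ in $\mathcal{O}_1^{C^k}$.

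Finally, $(s\times t)^{-1}(K)$ must be closed. Since $s\times t$ is continuous, this follows once the limit $\Psi$ satisfies $(s\times t)(\Psi)\in K$, which holds because $K$ is closed; we will check that $\mathfrak{Map}_0$ is Hausdorff so that compact sets are closed. The main obstacle is the correction step, namely showing that the small-$\widehat{H}_1$ correction is globally consistent, smooth over $G_0$, and lands exactly in $Q_\infty$. We would handle it pointwise via uniqueness in condition (3), and then get smoothness from the local product description of $\widehat{P}$ in Proposition 19.
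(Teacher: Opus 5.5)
\textbf{There is a genuine gap in your key step:} the construction of $\Psi_n\colon P_\infty\to Q_\infty$ with $(\Psi_n)_{*,1}=\Phi_n$.

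Every map of the form $\Psi_{*,1}$ is the restriction of the extension $\widehat{\Psi}\colon\widehat{P}_\infty\to\widehat{Q}_\infty$, and $\widehat{\Psi}$ preserves the reference anchors. So what you must prove is the exact identity $\widehat{a}_R^{Q_\infty}(\Phi_n(p))=\widehat{a}_R^{P_\infty}(p)$ for all $p\in P_n$. You only know that these two points of $\widehat{H}_0$ are within $|f_n|_{C^0}+|g_n|_{C^0}$ of each other, and conditions (2) and (3) cannot upgrade this:
\begin{itemize}
\item If ``correcting'' means replacing $\Phi_n(p)$ by $\Phi_n(p)\cdot\eta$ with $\eta$ close to a unit, no such $\eta$ joins two distinct points. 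The unit space is open in the \'etale groupoid $\widehat{\mathcal{H}}$, so arrows near units are units.
\item If it means transporting arrows $\gamma\mapsto\gamma'$ as in Proposition 22, that step already presupposes that the two anchors agree.
\end{itemize}
So ``the anchors are matched up to the $\epsilon$-correction'' is exactly the statement that needs a proof. It also cannot hold for a single fixed small deformation. Take $\mathcal{G}$ a point and $\mathcal{H}$ presenting a compact manifold, so bibundles are points. Two distinct nearby points $x\neq y$, both deformed to their midpoint, give $P_x(f)\cong P_y(g)$ while $\mathrm{Iso}(P_x,P_y)=\emptyset$. The identity must therefore come from $f_n,g_n\to 0$ combined with a compactness argument. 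A distance estimate alone will not do: with isotropy, as in $\mathbb{Z}/2\ltimes\mathbb{R}$, the distinct points $\pm c$ of one orbit are arbitrarily close, so no uniform separation helps.

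\textbf{What the paper's proof supplies.} Over each $g\in G_0$, $\Phi_k(x)$ ranges over the finite set $Q^{2\epsilon}\cap\widehat{a}_L^{-1}(g)$. Hence it is constant along a subsequence, and the anchor identity follows by letting $k\to\infty$. By continuity the same local sheet is then used over a $\delta$-neighbourhood. Compactness of $|\mathcal{G}|$, together with finitely many successive extractions, makes this global.

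\textbf{How to graft this onto your plan.} Do the pigeonhole at one point in each of finitely many connected chart domains whose images cover $|\mathcal{G}|$. Unique lifting through $\widehat{a}_L$ on these domains, $\star$-equivariance and $\mathcal{G}$-equivariance then show that along the resulting subsequence every $\Phi_n$ preserves $\widehat{a}_R$. After that, your remaining steps go through:
\begin{itemize}
\item transported arrows depend only on $\gamma$ and the prescribed target, so $\star$-equivariance becomes $\widehat{\mathcal{H}}$-equivariance;
\item $P_\infty=\widehat{a}_R^{-1}(H_0)$ puts the image inside $Q_\infty$;
\item all $\Phi_n$ in the subsequence in fact come from a single $\Psi$, so finiteness of $\mathrm{Iso}(P_\infty,Q_\infty)$ is not even needed.
\end{itemize}
The payoff of your formulation is that it gives $\Phi_n\in N^{C^k}_\delta(\Psi)$, i.e.\ genuine convergence in $\mathcal{O}_1^{C^k}$, which the paper's pointwise argument leaves implicit.

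\textbf{A minor point.} Finitely many points whose orbits meet every connected component of $G_0$ need not exist when $G_0$ has infinitely many components. Choosing one point in each of finitely many connected domains covering $|\mathcal{G}|$ repairs your finiteness argument as well.
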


\begin{proof}
    Let ${\{\Phi_k\}_{k\in \mathbb{N}}}$ be a sequence in ${\mathfrak{Map}_1^{C^k}\big(\mathcal{G},(\mathcal{H},\widehat{\mathcal{H}})\big)}$ such that
    \begin{gather*}
        s(\Phi_k)=P_k\stackrel{k\to \infty}{\longrightarrow} P, \quad t(\Phi_k)=Q_k\stackrel{k\to \infty}{\longrightarrow} Q.
    \end{gather*}
    Then we can choose ${f_k\in T_P\mathfrak{Map}_0^{C^k}\big(\mathcal{G},(\mathcal{H},\widehat{\mathcal{H}})\big)}$, ${l_k\in T_Q\mathfrak{Map}_0^{C^k}\big(\mathcal{G},(\mathcal{H},\widehat{\mathcal{H}})\big)}$ such that 
    \begin{gather*}
        P_k=P(f_k), \quad Q_k=Q(l_k).
    \end{gather*}
    We may assume that ${P_k\subset \widehat{P}}$ and ${Q_k\subset \widehat{Q}}$. We fix Riemannian metrics on $\widehat{P}$ and $\widehat{Q}$ induced by the left anchor maps ($=$ local diffeomorphisms) ${\widehat{a}_{L,\widehat{P}}\colon \widehat{P}\rightarrow G_0}$, ${\widehat{a}_{L,\widehat{Q}}\colon\widehat{Q}\rightarrow G_0}$. We denote the $\delta$-neighborhood of ${g\in G_0}$, ${p\in \widehat{P}}$, ${q\in \widehat{Q}}$ by ${N_{\delta}(g)}$, ${N_{\delta}(p)}$, ${N_{\delta}(q)}$, respectively. We define the $\delta$-neighborhood of ${[g]\in |\mathcal{G}|}$ by 
    \begin{gather*}
    N_{\delta}([g])=\bigcup_{g'\in \pi^{-1}([g])}\pi\big(N_{\delta}(g')\big)    
    \end{gather*}
    where ${\pi\colon G_0\rightarrow |\mathcal{G}|}$ is the natural projection. We fix a sufficiently small ${\delta>0}$ such that 
    \begin{gather*}
        \widehat{a}_{L,\widehat{P}}|_{N_{\delta}(p)}\colon N_{\delta}(p)\longrightarrow G_0, \quad \widehat{a}_{L,\widehat{Q}}|_{N_{\delta}(q)}\colon N_{\delta}(q)\longrightarrow G_0
    \end{gather*}
    are diffeomorphisms onto their images for any ${p\in P^{\epsilon}}$, ${q\in Q^{\epsilon}}$. First, for any ${g\in G_0}$, we show that we can choose a subsequence of ${\{\Phi_k\}}$ so that the subsequence converges to an isomorphism of principal right $\mathcal{H}$-bundles over ${g\in G_0}$. We extend each $\Phi_k$ to
    \begin{gather*}
        \Phi_k\colon P_k^{\epsilon}\longrightarrow Q_k^{\epsilon}.
    \end{gather*}
    Note that ${P_k^{\epsilon}\subset P^{2\epsilon}}$ and ${Q_k^{\epsilon}\subset Q^{2\epsilon}}$. For any ${x\in \widehat{a}_{L,\widehat{P}}^{-1}(g)\cap P_k^{\epsilon}}$, ${y_k=\Phi_k(x)\in Q^{2\epsilon}\cap \widehat{a}_{L,\widehat{Q}}^{-1}(g)}$ is an element of a finite set. Thus, after passing to a subsequence, we may assume that $y_k$ converges to some ${y\in Q^{2\epsilon}\cap \widehat{a}_{L,\widehat{Q}}^{-1}(g)}$. Then ${y\in Q}$, since
    \begin{gather*}
        \widehat{a}_{L,\widehat{Q}}(y)=a_{R,P}(x)\in H_0
    \end{gather*}
    holds. By the construction of $y_k$ and $y$, it follows that ${\Phi_k(x\cdot \gamma)}$ converges to ${y\cdot \gamma}$. Thus, we obtain an isomorphism of principal right $\mathcal{H}$-bundles over ${g\in G_0}$:
    \begin{gather*}
        \Phi_g\colon P|_{\{g\}}\longrightarrow Q|_{\{g\}}.
    \end{gather*}
    Then $\Phi_g$ induces an isomorphism of ${\mathcal{G}|_{[g]}}$-$\mathcal{H}$ bibundles:
    \begin{gather*}
        \Phi_{[g]}\colon P|_{[g]}\longrightarrow Q|_{[g]}.
    \end{gather*}
    Next, we show that ${\Phi_{[g]}}$ extends to the $\delta$-neighborhood of ${[g]\in |\mathcal{G}|}$. We fix ${g'\in N_{\delta}(g)\subset G_0}$ and ${x'\in N_{\delta}(x)\in P}$. By our choice of ${\delta}$, ${y_k'=\Phi_k(x')\in Q^{2\epsilon}\cap \widehat{a}_{L,\widehat{Q}}^{-1}(g')}$ is in the $\delta$-neighborhood of $y_k$ since the left anchor maps are local isometries on these neighborhoods. Thus, $y_k'$ also converges to ${y'\in N_{\delta}(y)}$. Therefore, ${\Phi_{[g]}}$ extends uniquely to the $\delta$-neighborhood of ${[g]}$:
    \begin{gather*}
        \Phi_{N_{\delta}([g])}\colon P|_{N_{\delta}([g])}\longrightarrow Q|_{N_{\delta}([g])}.
    \end{gather*}
    By the compactness of ${|\mathcal{G}|}$, choose finitely many such neighborhoods ${N_{\delta}([g_1]),\dots,N_{\delta}([g_m])}$ covering ${|\mathcal{G}|}$. Passing to subsequences successively, we may assume that $\Phi_k$ converges on each of these neighborhoods. Since the resulting local limits are all limits of the same subsequence, they agree on overlaps. Hence they glue to a global isomorphism of $\mathcal{G}$-$\mathcal{H}$ bibundles
    \begin{gather*}
        \Phi\colon P\longrightarrow Q.
    \end{gather*}
\end{proof}

The above proof can be applied to establish the Hausdorffness of the coarse moduli space.

\begin{proposition}
    The coarse moduli space ${|\mathfrak{Map}^{C^k}\big(\mathcal{G},(\mathcal{H},\widehat{\mathcal{H}})\big)|}$ is Hausdorff.
\end{proposition}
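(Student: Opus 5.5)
I will use the standard criterion for quotients of étale groupoids: the coarse moduli space of a topological groupoid with open quotient map is Hausdorff if and only if the orbit equivalence relation $R=\operatorname{Im}(s\times t)$ is closed in $\mathfrak{Map}_0^{C^k}\big(\mathcal{G},(\mathcal{H},\widehat{\mathcal{H}})\big)\times\mathfrak{Map}_0^{C^k}\big(\mathcal{G},(\mathcal{H},\widehat{\mathcal{H}})\big)$. So the proof has two steps: openness of the projection, and closedness of $R$.

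\textbf{Openness of the projection.} The projection $\pi\colon \mathfrak{Map}_0^{C^k}\to|\mathfrak{Map}^{C^k}|$ is open. For an open set $U$, the saturation $\pi^{-1}(\pi(U))=t(s^{-1}(U))$ is open. This follows because $s$ is continuous and $t$ is a local homeomorphism, hence an open map, by Proposition 25. Therefore two classes $[P]\neq[Q]$ have disjoint neighborhoods as soon as $(P,Q)$ has a product neighborhood $V\times W$ disjoint from $R$. The images $\pi(V)$ and $\pi(W)$ are then disjoint open sets separating $[P]$ and $[Q]$.

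\textbf{Closedness of $R$.} Suppose $(P_k,Q_k)\in R$ converges to $(P,Q)$, with $P_k=P(f_k)$ and $Q_k=Q(l_k)$ as in the proof of Proposition 26. Choose isomorphisms $\Phi_k\colon P_k\to Q_k$. I then run the argument of Proposition 26 verbatim.

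First, fix $g\in G_0$ and $x\in \widehat{a}_{L,\widehat{P}}^{-1}(g)$. The points $\Phi_k(x)$ lie in the finite set $Q^{2\epsilon}\cap\widehat{a}_{L,\widehat{Q}}^{-1}(g)$, so a subsequence makes them eventually constant. Their right anchors $a_{R,Q_k}(\Phi_k(x))=a_{R,P_k}(x)$ converge to $a_{R,P}(x)$ and $a_{R,Q}(y)$, which places the limit $y$ in $Q$. Equivariance then passes to the limit: $\Phi_k(x\cdot\gamma)$ is the corresponding $\star$-translate and converges to $y\cdot\gamma$.

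Next, extend over $\delta$-neighborhoods. By the choice of $\delta$ the left anchors are local isometries, so $\Phi_k(x')$ remains in $N_\delta(\Phi_k(x))$ for $x'\in N_\delta(x)$.

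Finally, use compactness of $|\mathcal{G}|$ to cover it by finitely many $N_\delta([g_i])$. Diagonal subsequences then give a global $\mathcal{G}$–$\mathcal{H}$ equivariant bijection $\Phi\colon P\to Q$. It is a local diffeomorphism because it commutes with the left anchors, which are local diffeomorphisms. Hence $\Phi$ is a bibundle isomorphism, and $(P,Q)\in R$.

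\textbf{Main obstacle.} The delicate step is the passage to the limit. I must check that the eventually-constant pointwise limits are compatible with the changing actions $\star$ on $P_k$ and $Q_k$. Each $\star_k$ is defined through the unique nearby $\gamma'\in\widehat{H}_1$ given by Conditions (2) and (3) on $r$. Convergence $f_k\to f$ in $C^k$, and in particular in $C^0$, forces these $\gamma'_k$ to converge to the ones used for $P$. This gives equivariance of the limit.

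One further point needs handling. If the representatives $P$ and $Q$ lie in components $N_\epsilon(P_0)$ and $N_\epsilon(Q_0)$ rather than being the base points, I first rewrite $P_k$ as $P(f_k)$ and $Q_k$ as $Q(l_k)$ using the embeddings $\iota$ from Proposition 23. After that, the argument above applies unchanged.
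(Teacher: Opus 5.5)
Your proposal is correct and is essentially the paper's argument: the paper also takes related pairs $P(f_k)\cong Q(l_k)$ with $P(f_k)\to P$ and $Q(l_k)\to Q$, and reruns the limit extraction of Proposition 26 to obtain an isomorphism $P\cong Q$. The only difference is that you state the point-set reduction explicitly: the orbit projection is open because the groupoid is \'etale, so Hausdorffness is equivalent to closedness of the orbit relation; the paper's proof by contradiction uses this step without stating it.
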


\begin{proof}
    We prove the proposition by contradiction. If ${|\mathfrak{Map}^{C^k}\big(\mathcal{G},(\mathcal{H},\widehat{\mathcal{H}})\big)|}$ is not Hausdorff, there exist ${P,Q\in \mathfrak{Map}_0^{C^k}\big(\mathcal{G},(\mathcal{H},\widehat{\mathcal{H}})\big)}$ with ${[P]\neq[Q]}$ in ${|\mathfrak{Map}^{C^k}\big(\mathcal{G},(\mathcal{H},\widehat{\mathcal{H}})\big)|}$, a decreasing sequence ${\epsilon_k\rightarrow 0}$, and tangent vectors
    \begin{gather*}
        f_k\in T_P\mathfrak{Map}_0^{C^k}\big(\mathcal{G},(\mathcal{H},\widehat{\mathcal{H}})\big)^{<\epsilon_k}, \quad l_k\in T_Q\mathfrak{Map}_0^{C^k}\big(\mathcal{G},(\mathcal{H},\widehat{\mathcal{H}})\big)^{<\epsilon_k},
    \end{gather*}
    together with isomorphisms of bibundles
    \begin{gather*}
        \Phi_k\colon P(f_k)\longrightarrow Q(l_k).
    \end{gather*}
    Since ${P(f_k)\to P}$ and ${Q(l_k)\to Q}$ as ${k\to \infty}$, by applying the proof of Proposition 26, we can choose a subsequence of ${\{\Phi_k\}}$ that converges to an isomorphism of bibundles
    \begin{gather*}
        \Phi\colon P\longrightarrow Q.
    \end{gather*}
    Consequently, ${[P]=[Q]}$ holds in ${|\mathfrak{Map}^{C^k}\big(\mathcal{G},(\mathcal{H},\widehat{\mathcal{H}})\big)|}$, which leads to a contradiction. Therefore, the coarse moduli space ${|\mathfrak{Map}^{C^k}\big(\mathcal{G},(\mathcal{H},\widehat{\mathcal{H}})\big)|}$ is Hausdorff.
\end{proof}

The following proposition implies that the Morita equivalence class of the Banach Lie groupoid ${\mathfrak{Map}^{C^k}\big(\mathcal{G},(\mathcal{H},\widehat{\mathcal{H}})\big)}$ does not depend on the choices of the representative orbifold groupoid $\mathcal{G}$ and the admissible pair ${(\mathcal{H},\widehat{\mathcal{H}})}$.

\begin{proposition}
    Let $\mathcal{G}'$ be an orbifold groupoid Morita equivalent to $\mathcal{G}$, and let ${\mathcal{H}'}$ be an admissible orbifold groupoid Morita equivalent to $\mathcal{H}$. For any admissible pair ${(\mathcal{H}',\widehat{\mathcal{H}}')}$, the two Banach Lie groupoids
    \begin{gather*}
        \mathfrak{Map}^{C^k}\big(\mathcal{G},(\mathcal{H},\widehat{\mathcal{H}})\big), \quad \mathfrak{Map}^{C^k}\big(\mathcal{G}',(\mathcal{H}',\widehat{\mathcal{H}}')\big)
    \end{gather*}
    are Morita equivalent.
\end{proposition}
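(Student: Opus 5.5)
The natural plan is to split the statement into two independent changes and compose the resulting Morita equivalences. First change the source, comparing $\mathfrak{Map}^{C^k}\big(\mathcal{G},(\mathcal{H},\widehat{\mathcal{H}})\big)$ with $\mathfrak{Map}^{C^k}\big(\mathcal{G}',(\mathcal{H},\widehat{\mathcal{H}})\big)$. Then change the target, comparing $\mathfrak{Map}^{C^k}\big(\mathcal{G}',(\mathcal{H},\widehat{\mathcal{H}})\big)$ with $\mathfrak{Map}^{C^k}\big(\mathcal{G}',(\mathcal{H}',\widehat{\mathcal{H}}')\big)$. Transitivity of Morita equivalence, proved via fiber products as in the remark after Definition 6, then gives the statement.

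In each case I would not construct a smooth equivalence of mapping groupoids directly. Instead I would pass through a common refinement. For the source, choose an orbifold groupoid $\mathcal{K}$ with equivalences $\mathcal{G}\leftarrow\mathcal{K}\rightarrow\mathcal{G}'$, and prove that an equivalence $w\colon\mathcal{K}\to\mathcal{G}$ induces a smooth equivalence of mapping groupoids. On objects this is pullback of bibundles, $P\mapsto P\circ\langle w\rangle$, which is explicitly $P\mapsto \mathcal{K}_0\times_{G_0}P$ with the induced actions. On the tangent spaces $T_P\mathfrak{Map}_0^{C^k}$ it is pullback of invariant–equivariant lifts $f\mapsto f\circ\mathrm{pr}_P$. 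Three facts have to be checked for this map.
\begin{itemize}
\item The pullback is compatible with the charts, i.e.\ $w^*(P(f))=(w^*P)(w^*f)$. This is immediate from the construction of $P(f)$ inside $\widehat P$ together with the uniqueness part of Proposition 19.
\item The map $f\mapsto w^*f$ is a linear isomorphism of Banach spaces, with inverse given by descent. An equivariant lift on $w^*P$ is invariant under the $\mathcal{K}$-action, and since $w$ is an equivalence and $\mathcal{G}$-invariance is part of the definition of the tangent space, it descends uniquely to $P$. Compactness of $|\mathcal{G}|$ makes the $C^k$ norms comparable.
\item On morphisms, bibundle isomorphisms $P_1\to P_2$ correspond bijectively to isomorphisms $w^*P_1\to w^*P_2$. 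This is the statement that $\langle w\rangle$ is invertible in $\mathbf{HS}$ (Remark 15, Theorem 14), which gives full faithfulness and the fiber product condition of Definition 5.
\end{itemize}
Essential surjectivity holds because every bibundle from $\mathcal{K}$ is isomorphic to one of the form $w^*P$, namely $w^*(Q\circ\langle w\rangle^{-1})$. Since $\mathfrak{Map}$ is étale (Proposition 25), essential surjectivity plus the local diffeomorphism property on charts gives the surjective submersion condition.

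For the target, use the Morita equivalence $\mathcal{H}\leftarrow\mathcal{L}\rightarrow\mathcal{H}'$ and post-composition $P\mapsto \langle v\rangle\circ P$, again showing that each equivalence $v$ induces an equivalence of mapping groupoids. Here the difficulty is that the charts use the admissible pair, so I need an intermediate step. First I would show independence of the admissible enlargement. Given admissible pairs $(\mathcal{H},\widehat{\mathcal{H}}_1)$ and $(\mathcal{H},\widehat{\mathcal{H}}_2)$, the object sets and arrow sets coincide, since they consist of the same bibundles and isomorphisms. The chart maps $f\mapsto P(f)$ differ only through the choice of exponential map, so the identity should be a diffeomorphism by a standard change-of-charts argument in $C^k$. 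Since the exponential maps are smooth, the transition $f\mapsto \exp^{-1}_2\circ\exp_1\circ f$ is a smooth map of $C^k$ sections (the $\omega$-lemma). Then I would reduce to the case where $\mathcal{L}$ is itself admissible (Proposition 18) and $v$ is an open embedding onto a full subgroupoid, as in the proof of Proposition 18. In that situation post-composition is simply the inclusion of bibundles with anchor values in the smaller object space, and this inclusion is an open étale equivalence.

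I expect the main obstacle to be this target-change step, specifically verifying smoothness of the induced maps with respect to the chart structure built from $\widehat{\mathcal{H}}$. The tangent space $T_P\mathfrak{Map}_0$ is defined in terms of $TH_0$ and the specific exponential map, so a general equivalence $v\colon\mathcal{L}\to\mathcal{H}$ does not intertwine the charts. I would first try to restrict to equivalences that are open embeddings compatible with the enlargements, and to handle everything else by the $\omega$-lemma transition argument. Hausdorffness and properness are already available from Propositions 26 and 27 and need not be reproved.
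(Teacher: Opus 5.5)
\textbf{Verdict: genuine gap, in the target-change half.} Your source-change half is essentially sound. Pullback along an \'{e}tale equivalence $w\colon\mathcal{K}\to\mathcal{G}$ commutes with the construction $P(f)\subset\widehat{P}$, and both tangent vectors and isomorphisms descend. Two bookkeeping points remain there. First, the invertibility of $\langle w\rangle$ in $\mathbf{HS}$ (Theorem 16) only concerns isomorphism classes, so the bijection on isomorphism sets needs the 2-isomorphisms of Remark 15 or a direct descent argument. Second, $\mathfrak{Map}_0$ is $\coprod_{[R]}N_\epsilon(R)$ with fixed representatives, and these cannot be simultaneously of the form $w^*P$ and $(w')^*P'$. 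So your functors must be twisted by chosen isomorphisms $w^*P\cong R$, and the radii shrunk.

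In the target-change half, consider two enlargements $(\mathcal{H},\widehat{\mathcal{H}}_1)$ and $(\mathcal{H},\widehat{\mathcal{H}}_2)$. Their object and arrow sets do not coincide. The objects are the subsets $P(f)=(\exp_{f^\epsilon})^{-1}(H_0)$ of the extension $\widehat{P}$, which depends on $\widehat{\mathcal{H}}$. The two $\epsilon$-balls, measured with different metrics, need not even represent the same isomorphism classes, so there is no identity map to check. Worse, the transition $f\mapsto\exp_2^{-1}\circ\exp_1\circ f$ is not defined on all of $P$. If $a_R(x)$ lies within $\epsilon$ of the frontier of $H_0$, the point $\exp^1_{a_R(x)}f(x)$ can lie in $\widehat{H}_{0,1}\setminus H_0$, which has no counterpart in $\widehat{H}_{0,2}$. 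The $\omega$-lemma does not address this.

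Your proposed reduction does not repair it. Proposition 18 supplies no zig-zag of open full embeddings between two given admissible groupoids; its comparison map $\bigsqcup_i U^{\epsilon_{p_i}}_{p_i}\to G_0$ is a local diffeomorphism that is in general not injective. Even for an open full embedding $\iota\colon\mathcal{L}\hookrightarrow\mathcal{H}$, post-composition is not an inclusion. The anchor image of a principal $\mathcal{H}$-bundle is $\mathcal{H}$-saturated, so a bibundle $P\colon\mathcal{G}\to\mathcal{L}$ is in general not a bibundle to $\mathcal{H}$. Rather, $\langle\iota\rangle\circ P$ is its extension as in Proposition 19, with inverse $Q\mapsto a_R^{-1}(L_0)$. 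Comparing $\widehat{\mathcal{L}}$-charts with $\widehat{\mathcal{H}}$-charts then meets the same frontier problem, since $\exp^{\widehat{L}}f$ may leave $L_0$, where $\iota$ is undefined.

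The missing idea is to work on a compact core:
\begin{enumerate}
\item Pick a compact $C\subset L_0$ meeting every orbit; this is possible because $|\mathcal{L}|$ is compact.
\item Shrink $\epsilon$ below the distance from $C$ to $\widehat{L}_0\setminus L_0$.
\item Define the transition only at points of $P$ with anchor in $C$, by $(\exp^{\widehat{H}})^{-1}\circ v_0\circ\exp^{\widehat{L}}\circ f$ (with $v_0=\mathrm{id}$ for a change of enlargement).
\item Extend it by $\mathcal{H}$-equivariance. This is consistent because invariant metrics make the maps $\phi_\gamma$ intertwine the exponential maps.
\item Observe that the two deformed bibundles agree over the core and are therefore canonically isomorphic.
\end{enumerate}
This works for any \'{e}tale equivalence $v$, so no reduction to open embeddings is needed.

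The paper avoids zig-zags of functors altogether. From invertible bibundles $E\colon\mathcal{G}'\to\mathcal{G}$ and $F\colon\mathcal{H}\to\mathcal{H}'$ it builds a single bibundle $\Phi=\{(P,P',\phi)\mid\phi\colon P'\to(F\circ P)\circ E\}$ between the two mapping groupoids, which changes source and target at once. Its inverse $\Psi$ is built from $E^{-1}$ and $F^{-1}$, and $\Psi\circ\Phi\cong\langle\mathrm{Id}\rangle$ comes from $\big(F^{-1}\circ((F\circ P_1)\circ E)\big)\circ E^{-1}\cong P_1$. A bibundle needs no object map into the fixed decomposition $\coprod_{[P]}N_\epsilon(P)$, so the representative bookkeeping disappears. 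The chart compatibility you rightly single out enters the paper only through its assertion that $a_L$ is a local diffeomorphism, which is exactly where the core argument belongs.
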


\begin{proof}
    Let ${E\colon \mathcal{G}'\rightarrow \mathcal{G}}$ and ${F\colon \mathcal{H}\rightarrow \mathcal{H}'}$ be bibundles that are invertible in the category ${\mathbf{HS}}$. In other words, there exist bibundles ${E^{-1}\colon \mathcal{G}\rightarrow \mathcal{G}'}$, ${F^{-1}\colon \mathcal{H}'\rightarrow \mathcal{H}}$ such that
    \begin{gather*}
        E^{-1}\circ E\cong \langle \operatorname{Id}_{\mathcal{G}'}\rangle, \quad E\circ E^{-1}\cong \langle \operatorname{Id}_{\mathcal{G}}\rangle, \\
        F^{-1}\circ F\cong \langle \operatorname{Id_{\mathcal{H}}}\rangle, \quad F\circ F^{-1}\cong \langle \operatorname{Id}_{\mathcal{H}'}\rangle.
    \end{gather*}

    We construct a Banach bibundle from ${\mathfrak{Map}^{C^k}\big(\mathcal{G},(\mathcal{H},\widehat{\mathcal{H}})\big)}$ to ${\mathfrak{Map}^{C^k}\big(\mathcal{G}',(\mathcal{H}',\widehat{\mathcal{H}}')\big)}$ explicitly as follows:
    \begin{gather*}
        \Phi=\Bigg\{ (P,P',\phi) \ \Bigg| \ \begin{matrix}
            P\in\mathfrak{Map}_0^{C^k}\big(\mathcal{G},(\mathcal{H},\widehat{\mathcal{H}})\big), P'\in \mathfrak{Map}_0^{C^k}\big(\mathcal{G}',(\mathcal{H}',\widehat{\mathcal{H}}')\big) \\ \phi\colon P'\rightarrow (F\circ P)\circ E \text{ is an isomorphism of bibundles}
        \end{matrix}\Bigg\}  
    \end{gather*}
    with anchor maps defined by
    \begin{gather*}
        a_L\big((P,P',\phi)\big)=P,\quad a_R\big((P,P',\phi)\big)=P'.
    \end{gather*}
    Note that since $E$ and $F$ are smooth bibundles, the composition ${(F\circ P)\circ E}$ is a $C^k$ bibundle for each ${P\in \mathfrak{Map}_0^{C^k}\big(\mathcal{G},(\mathcal{H},\widehat{\mathcal{H}})\big)}$. For an isomorphism ${\gamma\colon P_1'\rightarrow P_2'\in \mathfrak{Map}_1^{C^k}\big(\mathcal{G}',(\mathcal{H}',\widehat{\mathcal{H}}')\big)}$, the right action is defined by
    \begin{gather*}
        (P,P_2',\phi)\cdot \gamma=(P,P_1',\phi\circ \gamma),
    \end{gather*}
    and for an isomorphism ${\tau\colon P_1\rightarrow P_2\in \mathfrak{Map}_1^{C^k}\big(\mathcal{G},(\mathcal{H},\widehat{\mathcal{H}})\big)}$, the left action is defined by
    \begin{gather*}
        \tau\cdot(P_1,P',\phi)=(P_2,P',\widetilde{\tau}\circ \phi)
    \end{gather*}
    where ${\widetilde{\tau}\colon (F\circ P_1)\circ E\rightarrow (F\circ P_2)\circ E }$ is the isomorphism induced by ${\tau}$. Note that the map
    \begin{gather*}
        \Big|\mathfrak{Map}^{C^k}\big(\mathcal{G},(\mathcal{H},\widehat{\mathcal{H}})\big)\Big|\longrightarrow \Big|\mathfrak{Map}^{C^k}\big(\mathcal{G}',(\mathcal{H}',\widehat{\mathcal{H}}')\big)\Big|  \\
        [P]\longmapsto [(F\circ P)\circ E]
    \end{gather*}
    is a bijection. Furthermore, by the construction of the local topologies via local deformation of bibundles, ${a_L}$ is a local diffeomorphism. These properties imply that ${a_L\colon \Phi\rightarrow \mathfrak{Map}_0^{C^k}\big(\mathcal{G},(\mathcal{H},\widehat{\mathcal{H}})\big)}$ is a principal right ${\mathfrak{Map}^{C^k}\big(\mathcal{G}',(\mathcal{H}',\widehat{\mathcal{H}}')\big)}$-bundle. Since the left action of ${\mathfrak{Map}^{C^k}\big(\mathcal{G},(\mathcal{H},\widehat{\mathcal{H}})\big)}$ commutes with the right action, $\Phi$ is a Banach bibundle from ${\mathfrak{Map}^{C^k}\big(\mathcal{G},(\mathcal{H},\widehat{\mathcal{H}})\big)}$ to ${\mathfrak{Map}^{C^k}\big(\mathcal{G}',(\mathcal{H}',\widehat{\mathcal{H}}')\big)}$. Similarly, we construct a bibundle $\Psi$ from ${\mathfrak{Map}^{C^k}\big(\mathcal{G}',(\mathcal{H}',\widehat{\mathcal{H}}')\big)}$ to ${\mathfrak{Map}^{C^k}\big(\mathcal{G},(\mathcal{H},\widehat{\mathcal{H}})\big)}$ by
    \begin{gather*}
        \Psi=\Bigg\{(P',P,\psi) \ \Bigg| \ \begin{matrix}
            P'\in \mathfrak{Map}_0^{C^k}\big(\mathcal{G}',(\mathcal{H}',\widehat{\mathcal{H}}')\big), P\in \mathfrak{Map}_0^{C^k}\big(\mathcal{G},(\mathcal{H},\widehat{\mathcal{H}})\big) \\
            \psi\colon P\rightarrow (F^{-1}\circ P')\circ E^{-1} \text{ is an isomorphism of bibundles}
        \end{matrix}\Bigg\}.
    \end{gather*}
    Then, it is straightforward to see that the composition ${\Psi\circ \Phi}$ is isomorphic to a bibundle $\Theta$ defined by
    \begin{gather*}
        \Theta=\Bigg\{(P_1,P_2,\theta) \ \Bigg| \ \begin{matrix}
            P_1,P_2\in \mathfrak{Map}_0^{C^k}\big(\mathcal{G},(\mathcal{H},\widehat{\mathcal{H}})\big) \\
            \theta\colon P_2\rightarrow \bigg(F^{-1}\circ \Big((F\circ P_1)\circ E\Big)\bigg)\circ E^{-1}\text{ is an isomorphism of bibundles}
        \end{matrix}\Bigg\}.
    \end{gather*}
    Because the composition of bibundles ${\bigg(F^{-1}\circ \Big((F\circ P_1)\circ E\Big)\bigg)\circ E^{-1}}$ is canonically isomorphic to the bibundle ${P_1}$, $\Theta$ is canonically isomorphic to the bibundle
    \begin{gather*}
        \big\langle \operatorname{Id}_{\mathfrak{Map}^{C^k}\big(\mathcal{G},(\mathcal{H},\widehat{\mathcal{H}})\big)}\big\rangle =\Bigg\{(P_1,P_2,\rho) \ \Bigg| \ \begin{matrix}
        P_1,P_2\in \mathfrak{Map}_0^{C^k}\big(\mathcal{G},(\mathcal{H},\widehat{\mathcal{H}})\big) \\ \rho\colon P_2\rightarrow P_1\in \mathfrak{Map}_1^{C^k}\big(\mathcal{G},(\mathcal{H},\widehat{\mathcal{H}})\big)
        \end{matrix}\Bigg\}.
    \end{gather*}
    Therefore, 
    \begin{gather*}
        \Psi\circ \Phi\cong \big\langle \operatorname{Id}_{\mathfrak{Map}^{C^k}\big(\mathcal{G},(\mathcal{H},\widehat{\mathcal{H}})\big)}\big\rangle
    \end{gather*}
    holds. By interchanging the roles of $\Phi$ and $\Psi$, we also obtain

\begin{gather*}
    \Phi\circ \Psi\cong \big\langle \operatorname{Id}_{\mathfrak{Map}^{C^k}\big(\mathcal{G}',(\mathcal{H}',\widehat{\mathcal{H}}')\big)}\big\rangle.
\end{gather*}
Thus, the two Banach Lie groupoids ${\mathfrak{Map}^{C^k}\big(\mathcal{G},(\mathcal{H},\widehat{\mathcal{H}})\big)}$ and ${\mathfrak{Map}^{C^k}\big(\mathcal{G}',(\mathcal{H}',\widehat{\mathcal{H}}')\big)}$ are Morita equivalent.
\end{proof}

By Proposition 28, we can denote the Banach groupoid ${\mathfrak{Map}^{C^k}\big(\mathcal{G},(\mathcal{H},\widehat{\mathcal{H}})\big)}$ by ${\mathfrak{Map}^{C^k}(\mathcal{G},\mathcal{H})}$. This Banach Lie groupoid admits local orbifold charts. In particular, it is locally isomorphic to the action groupoid of a finite group action.

\begin{proposition}
    For any $C^k$ bibundle ${Q\in \mathfrak{Map}_0^{C^k}(\mathcal{G},\mathcal{H})}$, there exist an open neighborhood $U$ of $Q$ and a finite group $\Gamma$ acting on $U$ such that
    \begin{gather*}
        \mathfrak{Map}^{C^k}(\mathcal{G},\mathcal{H})\Big|_{U}\cong \Gamma \ltimes U.
    \end{gather*}
\end{proposition}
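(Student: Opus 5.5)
Take $\Gamma=\operatorname{Aut}(Q)$, the group of $\mathcal{G}$-$\mathcal{H}$ equivariant self-diffeomorphisms of $Q$, and take $U=N_{\delta}^{C^k}(Q)$ for a suitably small $\delta>0$. The proof then has three steps: show $\Gamma$ is finite, define the action of $\Gamma$ on $U$, and show that for small $\delta$ every arrow of $\mathfrak{Map}^{C^k}(\mathcal{G},\mathcal{H})$ with source and target in $U$ comes from a unique element of $\Gamma$.

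\emph{Finiteness of $\Gamma$.} For $g\in G_0$, an automorphism $\Phi$ of $Q$ restricts to an $\mathcal{H}$-equivariant bijection $Q|_{\{g\}}\to Q|_{\{g\}}$. Since $Q|_{\{g\}}$ is a free transitive $\mathcal{H}$-set, this restriction is determined by the image of a single point. That image lies in a set of the form $\{q\cdot\gamma\}$ with $s(\gamma),t(\gamma)$ constrained, which is finite by properness and \'etaleness of $\mathcal{H}$. Because $a_L\colon Q\to G_0$ is a local diffeomorphism and $\Phi$ is continuous, $\Phi$ is determined on a neighborhood of the fiber by its value at one point. The $\mathcal{G}$-equivariance propagates this along orbits. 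Compactness of $|\mathcal{G}|$ then gives finitely many neighborhoods $N_\delta([g_i])$ covering $|\mathcal{G}|$, as in the proof of Proposition 26. Hence $\Phi$ is determined by its restriction to finitely many finite fibers, and $\Gamma$ is finite.

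\emph{The action.} For $\Phi\in\Gamma$ we have the map $\Phi_{*,0}\colon N_\delta^{C^k}(Q)\to N_\delta^{C^k}(Q)$, $Q(f)\mapsto Q(\Phi_*f)$, and the induced isomorphism $\Phi_{*,1}\colon Q(f)\to Q(\Phi_*f)$. The identities $(\Phi\Psi)_*=\Phi_*\Psi_*$ and $(\Phi\Psi)_{*,1}=\Phi_{*,1}\Psi_{*,1}$ follow from the construction, and $|\Phi_*f|_{C^k}=|f|_{C^k}$ by invariance of the metric, so the $\delta$-ball is preserved. This gives an action of $\Gamma$ on $U$ and an injective groupoid morphism $\Gamma\ltimes U\to\mathfrak{Map}^{C^k}|_U$, $(\Phi,Q(f))\mapsto\Phi_{*,1}$. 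By the definition of $\mathcal{O}_1$ through the neighborhoods $N_\delta(\Phi)$, this morphism is an open embedding on arrows.

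\emph{Surjectivity on arrows (main obstacle).} We must show that for $\delta$ small, every isomorphism $\Psi\colon Q(f)\to Q(l)$ with $f,l$ in the $\delta$-ball equals $\Phi_{*,1}$ for some $\Phi\in\Gamma$. Argue by contradiction. Suppose there are $\delta_k\to0$ and isomorphisms $\Psi_k\colon Q(f_k)\to Q(l_k)$, none of this form. The convergence argument of Proposition 26 gives a subsequence converging to an automorphism $\Phi$ of $Q$, meaning that pointwise on the finite fibers $\Psi_k(x)$ eventually lies near $\Phi(x)$. Since the fibers are discrete, and by the choice of $r$ (Conditions (1)--(3)), the fiber correspondences of $\Psi_k$ and $\Phi_{*,1}$ coincide for large $k$. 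Then $\Phi_{*,1}^{-1}\circ\Psi_k$ is an isomorphism $Q(f_k)\to Q(\Phi_*^{-1}l_k)$ inducing the identity on underlying points of $\widehat{Q}$. It therefore intertwines the right anchors, which forces $\Phi_*^{-1}l_k=f_k$ and $\Psi_k=\Phi_{*,1}$, a contradiction.

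The delicate part is making the phrase ``inducing the identity on points forces equal anchors'' precise. I would use uniqueness in Proposition 19 together with the uniqueness of nearby $\gamma'$ from Condition (2). This identifies $\mathfrak{Map}^{C^k}(\mathcal{G},\mathcal{H})|_U$ with $\Gamma\ltimes U$.
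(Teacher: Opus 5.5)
Your proposal is correct and follows the paper's proof. The paper also takes $\Gamma=\operatorname{Aut}(Q)$ and $U=N_{\epsilon_Q}^{C^k}(Q)$, and it invokes the convergence argument of Proposition 26 to show that for small $\epsilon_Q$ every arrow with source and target in $U$ lies in $\bigcup_i N_{\epsilon_Q}^{C^k}(\gamma_i)$; your contradiction argument simply spells this out. The one minor difference is in finiteness of $\Gamma$: you count isotropy choices over finitely many base points, while the paper deduces it from $\Gamma$ being discrete (by \'etaleness) and compact (by properness of $s\times t$).
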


\begin{proof}
    Let ${\Gamma\subset \mathfrak{Map}_1^{C^k}(\mathcal{G},\mathcal{H})}$ be the set of automorphisms of $Q$. Since the automorphism group $\Gamma$ of $Q$ is discrete and compact, it is a finite group
    \begin{gather*}
        \Gamma=\{\gamma_1,\gamma_2,\dots,\gamma_N\}.
    \end{gather*}
    Let $P'\in \mathfrak{Map}_0^{C^k}(\mathcal{G},\mathcal{H})$ be a $C^k$ bibundle such that
    \begin{gather*}
        Q\in N_{\epsilon}^{C^k}(P')\subset \mathfrak{Map}_0^{C^k}(\mathcal{G},\mathcal{H})=\coprod_{[P]\in \operatorname{Iso}^{C^k}(\mathcal{G},\mathcal{H})}N_{\epsilon}^{C^k}(P).
    \end{gather*}
    For a sufficiently small ${\epsilon_Q>0}$, we define the open neighborhood $U$ and the subspace $\widetilde{\Gamma}$ by
    \begin{gather*}
        U=N_{\epsilon_Q}^{C^k}(Q)\subset N_{\epsilon}^{C^k}(P'), \quad \widetilde{\Gamma}=\bigcup_{1\le i\le N}N_{\epsilon_Q}^{C^k}(\gamma_i).
    \end{gather*}
    Similarly to the proof of Proposition 26, we can show that for sufficiently small ${\epsilon_Q>0}$,
    \begin{gather*}
        \big\{\gamma\in \mathfrak{Map}_1^{C^k}(\mathcal{G},\mathcal{H}) \ \big| \  \big(s(\gamma),t(\gamma)\big)\in U\times U\big\}=\widetilde{\Gamma}
    \end{gather*}
    holds. Therefore, 
    \begin{gather*}
        \mathfrak{Map}^{C^k}(\mathcal{G},\mathcal{H})\Big|_{U}\cong \Gamma\ltimes U
    \end{gather*}
    holds.
\end{proof}

\begin{remark}
    The same arguments apply to the $C^{\infty}$ and $W^{k,p}$ settings, with the $W^{k,p}$ case relying on the Sobolev embedding ${W^{k,p}\hookrightarrow C^{0}}$. The relevant constructions and arguments are unchanged at the level of the local Banach and Fr\'{e}chet models. Thus, we obtain a Fr\'{e}chet orbifold groupoid ${\mathfrak{Map}^{C^{\infty}}(\mathcal{G},\mathcal{H})}$ and a Banach orbifold groupoid ${\mathfrak{Map}^{W^{k,p}}(\mathcal{G},\mathcal{H})}$.
\end{remark}
Summarizing the results established above, we arrive at the main theorem of this paper.

\begin{theorem}[Main Theorem]
    Let $\mathcal{G}$, $\mathcal{H}$ be orbifold groupoids with compact coarse moduli spaces ${|\mathcal{G}|}$, ${|\mathcal{H}|}$.
    \begin{enumerate}
        \item 
        For any ${k\in\mathbb{Z}_{\ge 0}}$, there exists a proper \'{e}tale Banach Lie groupoid ${\mathfrak{Map}^{C^k}(\mathcal{G},\mathcal{H})}$ that is locally isomorphic to the action groupoid ${\Gamma\ltimes U}$ of a finite group action. Moreover, the Morita equivalence class of ${\mathfrak{Map}^{C^k}(\mathcal{G},\mathcal{H})}$ does not depend on the choices of the representatives $\mathcal{G}$ and ${\mathcal{H}}$, and its coarse moduli space ${\big|\mathfrak{Map}^{C^k}(\mathcal{G},\mathcal{H})\big|}$ is Hausdorff and in bijection with the set of isomorphism classes of $C^k$ bibundles from $\mathcal{G}$ to $\mathcal{H}$.
        \item There exists a proper \'{e}tale Fr\'{e}chet Lie groupoid ${\mathfrak{Map}^{C^{\infty}}(\mathcal{G},\mathcal{H})}$ that is locally isomorphic to the action groupoid ${\Gamma\ltimes U}$ of a finite group action. Moreover, the Morita equivalence class of ${\mathfrak{Map}^{C^{\infty}}(\mathcal{G},\mathcal{H})}$ does not depend on the choices of the representatives $\mathcal{G}$ and $\mathcal{H}$, and the coarse moduli space ${\big|\mathfrak{Map}^{C^{\infty}}(\mathcal{G},\mathcal{H})\big|}$ is Hausdorff and in bijection with the set of isomorphism classes of $C^{\infty}$ bibundles from $\mathcal{G}$ to $\mathcal{H}$.
        \item For any ${k,p}$ with ${kp>\mathrm{dim}\mathcal{G}}$, there exists a proper \'{e}tale Banach Lie groupoid ${\mathfrak{Map}^{W^{k,p}}(\mathcal{G},\mathcal{H})}$ that is locally isomorphic to the action groupoid ${\Gamma\ltimes U}$ of a finite group action. Moreover, the Morita equivalence class of ${\mathfrak{Map}^{W^{k,p}}(\mathcal{G},\mathcal{H})}$ does not depend on the choices of representatives $\mathcal{G}$ and ${\mathcal{H}}$, and the coarse moduli space ${\big|\mathfrak{Map}^{W^{k,p}}(\mathcal{G},\mathcal{H})\big|}$ is Hausdorff and in bijection with the set of isomorphism classes of ${W^{k,p}}$ bibundles from $\mathcal{G}$ to ${\mathcal{H}}$.
    \end{enumerate}
\end{theorem}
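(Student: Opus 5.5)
The Main Theorem is an assembly of the results of this section, so I will prove it by collecting them in order, treating the $C^k$ case in full and then indicating the changes for $C^\infty$ and $W^{k,p}$.

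\textbf{Setup.} By Proposition 18, replace $\mathcal{H}$ by a Morita equivalent admissible orbifold groupoid with an admissible pair $(\mathcal{H},\widehat{\mathcal{H}})$. Then form $\mathfrak{Map}^{C^k}\big(\mathcal{G},(\mathcal{H},\widehat{\mathcal{H}})\big)$ as constructed above. Proposition 22 shows that every $P(f)$ in $N_\epsilon^{C^k}(P)$ is a genuine $C^k$ bibundle, so the object space consists of $C^k$ bibundles. Proposition 23 and the charts $f\mapsto P(f)$ give a Banach manifold structure on the object space, modelled on $T_P\mathfrak{Map}_0^{C^k}$ with the $C^k$-norm. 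Proposition 24 gives the topology on the arrow space.

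\textbf{Lie groupoid and proper \'etale.} The source and target are local diffeomorphisms by Proposition 25. Multiplication, inversion and units are smooth, because in the charts $N_\delta^{C^k}(\Phi)$ they are given by composing the fixed finite-dimensional isomorphisms $\Phi_{*,1}$, which are linear in $f$. Properness of $s\times t$ is Proposition 26. Thus we have a proper \'etale Banach Lie groupoid.

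\textbf{Local structure and Hausdorffness.} The local isomorphism with $\Gamma\ltimes U$, where $\Gamma=\operatorname{Aut}(Q)$, is Proposition 29. Hausdorffness of the coarse space is Proposition 27.

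\textbf{Coarse space and independence.} To identify the coarse space with isomorphism classes of $C^k$ bibundles, I would define the map $[Q]\mapsto$ the isomorphism class of $Q$.
\begin{itemize}
\item It is \emph{well defined and injective}: arrows of the groupoid are exactly bibundle isomorphisms, by the definition of $\mathfrak{Map}_1^{C^k}$.
\item It is \emph{surjective}: a representative $P$ of every class lies in $\mathfrak{Map}_0^{C^k}$ as the centre $P(0)$ of $N_\epsilon^{C^k}(P)$.
\end{itemize}
Independence of the choices of $\mathcal{G}$, $\mathcal{H}$ and $\widehat{\mathcal{H}}$ up to Morita equivalence is Proposition 28. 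Applying it with $\mathcal{G}'=\mathcal{G}$, $\mathcal{H}'=\mathcal{H}$ and two different hats removes the dependence on $\widehat{\mathcal{H}}$. It also transports the bijection with isomorphism classes along $[P]\mapsto[(F\circ P)\circ E]$, since composing with the invertible bibundles $E$ and $F$ is a bijection on isomorphism classes.

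\textbf{Other regularities.} Parts (2) and (3) follow by Remark 30: all arguments above are pointwise or local in $f$.
\begin{itemize}
\item For $C^\infty$, replace the $C^k$-norm by the Fr\'echet family of $C^k$-seminorms. The $\epsilon$-balls are then taken in the $C^0$-seminorm, which is all Propositions 22--29 use, since the conditions on $r$ and $\epsilon$ are only pointwise distance conditions.
\item For $W^{k,p}$ with $kp>\dim\mathcal{G}$, use the Sobolev embedding $W^{k,p}\hookrightarrow C^0$. Small $W^{k,p}$-balls are then small in $C^0$, so the constructions of $P(f)$ and the compactness arguments go through. Composition with the smooth maps $\exp$ and $\phi_\gamma$ preserves $W^{k,p}$ in this range, so the charts and transition maps are Banach-smooth.
\end{itemize}

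\textbf{Main obstacle.} The hard part is smoothness of the transition maps $\iota_{Q,P}$ and of the groupoid structure maps in the Banach (or Fr\'echet) sense. This is what upgrades the topology of Proposition 23 to a genuine Banach manifold structure. I would reduce it, via Proposition 22's construction, to the standard fact that $f\mapsto \exp^{-1}\circ\exp(f)$ is a smooth map between spaces of $C^k$ (respectively $W^{k,p}$) sections, known as the omega lemma. In the $C^k$ case this costs one derivative, so I would either state it as $C^{\ell}$ for the relevant finite $\ell$ or pass through the Sobolev model where the omega lemma holds directly.
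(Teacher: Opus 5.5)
Your proposal is correct and follows the paper's own route: the paper proves the Main Theorem simply by assembling Propositions 18--29 and Remark 30. Your explicit coarse-space bijection and your treatment of Banach smoothness of the transition and structure maps spell out points the paper leaves implicit. One correction: the transition $g\mapsto \exp^{-1}_{a_R^P}\big(\exp_{a_R^Q}(g)\big)$, with $a_R^Q=\exp_{a_R^P}(f_0)$, is left composition with a map smooth in $(a_R^P,f_0,g)$, where the $C^k$ data enter only as arguments, so the omega lemma gives a $C^\infty$ map on $C^k$ sections with no loss of derivatives, and your fallback to a $C^\ell$ structure or a Sobolev model is unnecessary.
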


\end{document}